\numberwithin{equation}{section}
\numberwithin{figure}{section}
\theoremstyle{plain}
\newtheorem{thm}{\protect\theoremname}[section]
  \theoremstyle{plain}
  \newtheorem{lem}[thm]{\protect\lemmaname}
  \theoremstyle{plain}
  \newtheorem{prop}[thm]{\protect\propositionname}
  \theoremstyle{remark}
  \newtheorem{rem}[thm]{\protect\remarkname}
  \providecommand{\lemmaname}{Lemma}
  \providecommand{\propositionname}{Proposition}
  \providecommand{\remarkname}{Remark}
\providecommand{\theoremname}{Theorem}
\begin{document}

\title[Commutators of trace zero]{Commutators of trace zero matrices over principal ideal rings}

\author{Alexander Stasinski}
\begin{abstract}
We prove that for every trace zero square matrix $A$ of size at least
$3$ over a principal ideal ring $R$, there exist trace zero matrices
$X,Y$ over $R$ such that $XY-YX=A$. Moreover, we show that $X$
can be taken to be regular mod every maximal ideal of $R$. This strengthens
our earlier result that $A$ is a commutator of two matrices (not
necessarily of trace zero), and in addition, the present proof is
simpler than the earlier one.

Shalev has conjectured an analogous statement for group commutators
in $\SL_{n}$ over $p$-adic integers. We prove Shalev's conjecture
for $n=2$.
\end{abstract}

\address{Department of Mathematical Sciences, Durham University, South Rd,
Durham, DH1 3LE, UK}

\email{alexander.stasinski@durham.ac.uk}

\maketitle

\section{Introduction}

Let $R$ be a principal ideal ring, which we will always take to be
commutative with identity (e.g., $R$ could be a field). We let $\gl_{n}(R)$
denote the Lie algebra of $n\times n$ matrices over $R$ with Lie
bracket $[X,Y]=XY-YX$, and $\sl_{n}(R)$ the sub Lie algebra of trace
zero matrices. In case $R=K$ is a field, a theorem of Albert and
Muckenhoupt \cite{Albert-Muckenhoupt} says that every $A\in\sl_{n}(K)$
is a commutator in $\gl_{n}(K)$, that is, there exist $X,Y\in\gl_{n}(K)$
such that $[X,Y]=A$. To go beyond the field case requires new ideas
and the first major step was taken by Laffey and Reams \cite{Laffey-Reams}
who proved the analogous result for $R=\Z$, solving a problem posed
by Vaserstein \cite[Section~5]{Vaserstein/87}. Whether every element
in $\sl_{n}(R)$ is a commutator in $\gl_{n}(R)$ for a PIR $R$,
was an open problem going back implicitly at least to Lissner \cite{Lissner},
and was settled in the affirmative in \cite{commutatorsPID-2016}.

In light of the above results, a natural question is whether $X$
and $Y$ can be taken in $\sl_{n}(R)$, rather than just $\gl_{n}(R)$.
When $R=K$ is a field, it is known by work of Thompson \cite[Theorems~1-4]{Thompson-tracezero}
that any $A\in\sl_{n}(K)$ can be written as $A=[X,Y]$ for some $X,Y\in\sl_{n}(K)$,
except when $\chara K=2$ and $n=2$. A generalisation of Thompson's
result, allowing $X$ and $Y$ to lie in an arbitrary hyperplane in
$\gl_{n}(K)$ (but assuming $n>2$ and $|K|>3$), was recently obtained
by de~Seguins Pazzis \cite{SeguinsPazzis}. On the other hand, it
does not seem possible to modify our proof in \cite{commutatorsPID-2016}
to yield the stronger assertion that every $A\in\sl_{n}(R)$, with
$n\geq3$, is a commutator of matrices in $\sl_{n}(R)$, even in the
case where $R$ is a field.

The main result of the present paper is that for any principal ideal
domain (henceforth PID) $R$ and $A\in\sl_{n}(R)$, with $n\geq3$,
there exist $X,Y\in\sl_{n}(R)$ such that $A=[X,Y]$. It is also easy
to see that when $2$ is invertible in $R$, the same conclusion holds
for $A\in\sl_{2}(R)$. Moreover, it follows from our proof that $X$
can be chosen to be regular mod every maximal ideal of $R$ (this
was stated as an open problem in \cite{commutatorsPID-2016}). Our
proof is significantly simpler than the proof of the main result in
\cite{commutatorsPID-2016}, and the new idea is to consider the matrices\newlength\mtxrowsep   \setlength\mtxrowsep{0.01\arraystretch\baselineskip} \newlength\mtxcolsep   \setlength\mtxcolsep{0.3\arraycolsep}
$$X(\bfx,a)=
\begin{tikzpicture}[baseline=(current bounding box.center), 
column sep=\mtxcolsep,     row sep=\mtxrowsep,
every left delimiter/.style={xshift=9pt},     every right delimiter/.style={xshift=-5pt}]
\matrix (m) [matrix of math nodes,nodes in empty cells,right delimiter=),left delimiter=(]
{0 & 0 & 0 &  & 0\\
 x_{1} & 0 & 1 &  & \\
 &   & 0 &  & 0\\
 & 0 &  &  & 1\\
x_{n-1} & a & 0 &  & 0 \\ };

\draw[loosely dotted,thick] (m-3-3)-- (m-5-5); 
\draw[loosely dotted,thick] (m-2-1)-- (m-5-1); 
\draw[loosely dotted,thick] (m-1-3)-- (m-3-5); 
\draw[loosely dotted,thick] (m-1-3)-- (m-1-5); 
\draw[loosely dotted,thick] (m-1-5)-- (m-3-5); 
\draw[loosely dotted,thick] (m-3-3)-- (m-5-3); 
\draw[loosely dotted,thick] (m-2-3)-- (m-4-5);
\draw[loosely dotted,thick] (m-2-2)-- (m-4-2);
\draw[loosely dotted,thick] (m-5-3)-- (m-5-5);
\end{tikzpicture}
\in \sl_n(R),$$
where $\bfx=(x_{1},\dots,x_{n-1})^{\mathsf{{T}}}\in R^{n-1}$ and
$a\in R$; see Section~\ref{sec:The-matricesX}. These matrices have
some remarkable properties which let us carry through the proof. More
precisely, we show that for a given non-scalar $A\in\sl_{n}(R)$ in
Laffey\textendash Reams form (see \cite[Theorem~5.6]{commutatorsPID-2016}),
we can find $\bfx$ and $a$ such that 
\[
\tr(X(\bfx,a)^{r}A)=0,\quad\text{for }r=1,\dots,n-1,
\]
and at the same time ensure that $X(\bfx,a)$ mod $\mfp$ is regular
in $\gl_{n}(R/\mfp)$, for every maximal ideal $\mfp$ of $R$, as
well as regular in $\sl_{n}(R/\mfp)$, for any $\mfp$ for which $A$
is non-scalar mod $\mfp$. We note that the condition on the vanishing
of traces above is rather delicate, given that we also want $X(\bfx,a)$
to have the above regularity property and trace zero, and depends
on the existence of a solution of a system of polynomial equations
over $R$, which in most cases is hopelessly complicated. Nevertheless,
for the matrices $X(\bfx,a)$ the system of equations becomes atypically
simple, and we are able to show that a solution exists. We then use
the well known local-global principle for systems of linear equations
over rings, applied to the system defined by $[X(\bfx,a),Y]=A$, $Y\in\sl_{n}(R)$.
Working over the localisation $R_{\mfp}$ at a maximal ideal $\mfp$
of $R$, we use a variant of the criterion of Laffey and Reams (see
Section~\ref{sec:LF-criterion}, Proposition~\ref{prop:Criterion})
to show that the system has a solution if $A$ is non-scalar mod $\mfp$.
Here we use that $A$ mod $\mfp$ is not merely regular in $\gl_{n}(R/\mfp)$
but also regular in $\sl_{n}(R/\mfp)$. The existence of a solution
over $R_{\mfp}$ when $\mfp$ is such that $A$ mod $\mfp$ is scalar
is more subtle and requires a separate argument. The existence of
a local solution for every maximal ideal $\mfp$ then implies the
existence of global solution, and since any non-scalar matrix is $\GL_{n}(R)$-conjugate
to one in Laffey-Reams form, our main result follows (the case when
$A$ is scalar requires a separate discussion, but is easy).

Once the main result has been established for a PID, it is easy to
deduce it for an arbitrary principal ideal ring (not necessary and
integral domain).

In \cite[Conjecture~1.3]{Shalev-conjecture} and \cite[Conjecture~8.10]{Shalev-ErdosCentennial}
Shalev conjectured that if either $n\geq3$ or $n=2$ and $p>3$,
then any element in $\SL_{n}(\Z_{p})$ is a commutator of elements
in $\SL_{n}(\Z_{p})$ (here $\Z_{p}$ denotes the $p$-adic integers).
The main result of the present paper, in the case where $R=\Z_{p}$,
is therefore a Lie algebra analogue of this conjecture. In \cite{AGKS-SLnZp}
some results towards this conjecture were obtained. In Section~\ref{sec:Shalevs-conj}
we give a proof of Shalev's conjecture for $n=2$. 

We end this introduction with a word on notation. A ring (without
further specification) will mean a commutative ring with identity.
Throughout, we will use $1_{n}$ to denote the identity matrix in
$\gl_{n}(S)$, where $S$ is a ring with identity. If $X\in\gl_{n}(S)$,
$S[X]$ will denote the unital $S$-algebra generated by $X$.

\section{\label{sec:LF-criterion}The criterion of Laffey and Reams }

In this section, $K$ denotes an arbitrary field. We will prove an
analogue of the Laffey\textendash Reams criterion (see \cite[Section~3]{Laffey-Reams}
and \cite[Proposition~3.3]{commutatorsPID-2016}) for a matrix in
$\sl_{n}(R)$, $R$ a \emph{local} PID, to be a commutator of matrices
in $\sl_{n}(R)$. This criterion plays a key role in our proof of
the main theorem. 

We need a couple of remarks about regular elements in $\sl_{n}(K)$.
It is well known that an element $X\in\gl_{n}(K)$ is regular if and
only if 
\[
C_{\gl_{n}(K)}(X)=K[X],
\]
that is, if and only if the centraliser of $X$ in $\gl_{n}(K)$ has
dimension $n$. In this situation, we will say that $X$ is $\gl_{n}(K)$-\emph{regular.}
Similarly, if $X\in\sl_{n}(K)$ we define $X$ to be $\sl_{n}(K)$-\emph{regular}
if 
\[
\dim C_{\sl_{n}(K)}(X)=n-1.
\]
For $X\in\sl_{n}(K)$ it may happen that $X$ is $\gl_{n}(K)$-regular
but not $\sl_{n}(K)$-regular: take for example $\left(\begin{smallmatrix}0 & 0\\
1 & 0
\end{smallmatrix}\right)\in\sl_{n}(\F_{2})$.

The following result describes the precise relationship between the
properties $\sl_{n}$-regular and $\gl_{n}$-regular over a field.
\begin{lem}
\label{lem:gl_n-sl_n-regular}Let $X\in\sl_{n}(K)$. Then the following
holds:
\begin{enumerate}
\item If $X$ is $\sl_{n}(K)$-regular, then $X$ is $\gl_{n}(K)$-regular.
\item $X$ is $\sl_{n}(K)$-regular if and only if it is $\gl_{n}(K)$-regular
and $\tr(K[X])\neq0$.
\item If $\chara K$ does not divide $n$, then an element $X$ is $\sl_{n}(K)$-regular
if and only if it is $\gl_{n}(K)$-regular.
\end{enumerate}
\end{lem}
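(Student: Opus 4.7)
My plan is to express $C_{\sl_n(K)}(X)$ as the kernel of the trace functional on $C_{\gl_n(K)}(X)$. Indeed $C_{\sl_n(K)}(X) = C_{\gl_n(K)}(X)\cap\sl_n(K)$, and $\tr\colon C_{\gl_n(K)}(X)\to K$ is $K$-linear with image of dimension $0$ or $1$. This yields the uniform formula
\[
\dim C_{\sl_n(K)}(X) \;=\; \dim C_{\gl_n(K)}(X) - \varepsilon,
\]
where $\varepsilon = 0$ if $\tr$ vanishes on $C_{\gl_n(K)}(X)$ and $\varepsilon = 1$ otherwise. This simple identity, together with the classical bound $\dim C_{\gl_n(K)}(X) \geq n$ and the characterisation of $\gl_n(K)$-regularity as the equality $C_{\gl_n(K)}(X) = K[X]$ (both recalled in the paragraph preceding the lemma), is the entire engine of the proof.

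For (i), the hypothesis $\dim C_{\sl_n(K)}(X) = n-1$ combined with the formula forces $\dim C_{\gl_n(K)}(X) \leq n$, hence $= n$, so $X$ is $\gl_n(K)$-regular. For (ii), suppose first that $X$ is $\gl_n(K)$-regular. Then $C_{\gl_n(K)}(X) = K[X]$, so the condition $\varepsilon = 1$ becomes $\tr(K[X]) \neq 0$, which the formula translates to $\dim C_{\sl_n(K)}(X) = n-1$, i.e.\ $\sl_n(K)$-regularity. If on the other hand $X$ is not $\gl_n(K)$-regular, then $\dim C_{\gl_n(K)}(X) \geq n+1$, and the formula gives $\dim C_{\sl_n(K)}(X) \geq n$, so $X$ is not $\sl_n(K)$-regular either; in particular it cannot be both $\gl_n(K)$-regular and satisfy $\tr(K[X]) \neq 0$. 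This proves both directions of (ii).

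For (iii), when $\chara K$ does not divide $n$ we have $\tr(1_n) = n\cdot 1_K \neq 0$ and $1_n \in K[X]$, so the side condition $\tr(K[X]) \neq 0$ in (ii) is automatic. Hence $\gl_n(K)$-regularity implies $\sl_n(K)$-regularity, and the converse is (i).

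I do not foresee a genuine obstacle: the argument is a one-line linear-algebra observation about the trace restricted to the centraliser, and the rest is bookkeeping. The only point requiring minor care is to invoke the correct form of $\gl_n(K)$-regularity, namely $C_{\gl_n(K)}(X) = K[X]$, so that the abstract condition $\tr(C_{\gl_n(K)}(X)) \neq 0$ can be rewritten in the concrete form $\tr(K[X]) \neq 0$ used in the statement.
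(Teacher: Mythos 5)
Your proposal is correct and follows essentially the same route as the paper: the trace functional restricted to $C_{\gl_n(K)}(X)$ with kernel $C_{\sl_n(K)}(X)$, the classical bound $\dim C_{\gl_n(K)}(X)\geq n$, and the identification $C_{\gl_n(K)}(X)=K[X]$ in the regular case. Your packaging of the dimension count into a single formula with $\varepsilon$ is a tidy reorganisation but not a different argument.
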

\begin{proof}
For the first part, note that $C_{\sl_{n}(K)}(X)$ is either equal
to $C_{\gl_{n}(K)}(X)$ or is a hypersurface in $C_{\gl_{n}(K)}(X)$,
so $C_{\sl_{n}(K)}(X)$ has codimension at most one in $C_{\gl_{n}(K)}(X)$.
Thus $X$ being $\sl_{n}(K)$-regular implies that $\dim C_{\gl_{n}(K)}(X)\leq n$.
But it is well-known that the dimension of a centraliser in $\gl_{n}(K)$
is always at least $n$, so $X$ is $\gl_{n}(K)$-regular. 

For the second part, first note that $C_{\sl_{n}(K)}(X)$ is the kernel
of the trace map $\tr:C_{\gl_{n}(K)}(X)\rightarrow K$. Now, if $X$
is $\sl_{n}(K)$-regular, then by the previous part, $X$ is $\gl_{n}(K)$-regular,
so $C_{\gl_{n}(K)}(X)=K[X]$. Thus $\dim C_{\sl_{n}(K)}(X)=n-1$ implies
that this trace map is surjective, that is, that $\tr(K[X])\neq0$.
Conversely, if $X$ is $\gl_{n}(K)$-regular and $\tr(K[X])\neq0$,
then then $\dim C_{\gl_{n}(K)}(X)=n$ and $\tr:C_{\gl_{n}(K)}(X)\rightarrow K$
is surjective, so the kernel has dimension $n-1$.

Finally, when $\chara K$ does not divide $n$ and $X$ is $\gl_{n}(K)$-regular,
then $\tr(1_{n})=n\neq0$, so the previous part implies that $X$
is $\sl_{n}(K)$-regular.
\end{proof}
\begin{prop}
\label{prop:LF-criterion-fields}Let $X\in\sl_{n}(K)$ be $\sl_{n}(K)$-regular
and let $A\in\sl_{n}(K)$. Then $A=[X,Y]$ for some $Y\in\sl_{n}(K)$
if and only if $\Tr(X^{r}A)=0$ for all $r=1,\dots,n-1$.
\end{prop}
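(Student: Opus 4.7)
My plan is to attack this by a dimension count. The ``only if'' direction is a quick computation using only the cyclicity of the trace: if $A=[X,Y]$ with $Y\in\gl_{n}(K)$ (tracelessness of $Y$ is not even needed), then
$$\Tr(X^{r}A) = \Tr(X^{r+1}Y) - \Tr(X^{r}YX) = \Tr(X^{r+1}Y) - \Tr(X^{r+1}Y) = 0.$$

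For the ``if'' direction, I would consider the $K$-linear map $\mathrm{ad}_{X}:\sl_{n}(K)\to\sl_{n}(K)$, $Y\mapsto[X,Y]$. Its kernel is $C_{\sl_{n}(K)}(X)$, which has dimension $n-1$ by the $\sl_{n}(K)$-regularity hypothesis; hence its image has dimension $(n^{2}-1)-(n-1)=n^{2}-n$. Set $V=\{A\in\sl_{n}(K):\Tr(X^{r}A)=0,\ r=1,\dots,n-1\}$. The ``only if'' direction gives $\mathrm{im}(\mathrm{ad}_{X})\subseteq V$, so it suffices to show $\dim V=n^{2}-n$; equivalently, that the $n-1$ linear functionals $\phi_{r}:A\mapsto\Tr(X^{r}A)$ are linearly independent on $\sl_{n}(K)$.

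The linear independence is where the two regularity notions from Lemma~\ref{lem:gl_n-sl_n-regular} interact. A dependence $\sum_{r=1}^{n-1}c_{r}\phi_{r}=0$ means that $P:=\sum_{r=1}^{n-1}c_{r}X^{r}$ satisfies $\Tr(PA)=0$ for all $A\in\sl_{n}(K)$. Testing against the matrix units $E_{ij}$ ($i\neq j$) and against $E_{ii}-E_{jj}$ shows that the trace-orthogonal complement of $\sl_{n}(K)$ inside $\gl_{n}(K)$ is exactly the line $K\cdot 1_{n}$, so $P=c\cdot 1_{n}$ for some $c\in K$. By Lemma~\ref{lem:gl_n-sl_n-regular}(i), $X$ is $\gl_{n}(K)$-regular, so $1_{n},X,\dots,X^{n-1}$ are linearly independent in $\gl_{n}(K)$, which forces $c=0$ and $c_{r}=0$ for all $r$.

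The main subtlety I anticipate is the case $\chara K\mid n$, where $1_{n}\in\sl_{n}(K)$ and the trace form restricted to $\sl_{n}(K)$ is degenerate, so one cannot argue purely inside $\sl_{n}(K)$. Computing the orthogonal complement inside the ambient algebra $\gl_{n}(K)$, where the trace form is always non-degenerate, and invoking $\gl_{n}(K)$-regularity of $X$ (which by Lemma~\ref{lem:gl_n-sl_n-regular}(i) is implied by $\sl_{n}(K)$-regularity in every characteristic), side-steps this issue uniformly.
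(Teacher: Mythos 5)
Your proof is correct and follows essentially the same route as the paper: rank--nullity applied to $\mathrm{ad}_X$ on $\sl_n(K)$ gives $\dim[X,\sl_n(K)]=n^2-n$, the trace identity gives $[X,\sl_n(K)]\subseteq V$, and the two spaces are equated by dimension. The only difference is that you spell out explicitly why $\dim V=n^2-n$ (identifying the trace-orthogonal complement of $\sl_n(K)$ as $K\cdot 1_n$ and then using that $1_n,X,\dots,X^{n-1}$ are linearly independent), a step the paper compresses into one sentence but which rests on exactly the same facts.
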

\begin{proof}
Since $X$ is $\gl_{n}(K)$-regular by Lemma~\ref{lem:gl_n-sl_n-regular},
the set $\{1_{n},X,\dots,X^{n-1}\}$ is linearly independent over
$K$, so the subspace
\[
V=\{B\in\sl_{n}(K)\mid\Tr(X^{r}B)=0\text{ for }r=1,\dots,n-1\}
\]
has dimension $n^{2}-n$. The kernel of the linear map $\sl_{n}(K)\rightarrow\sl_{n}(K)$,
$Y\mapsto[X,Y]$ is equal to the centraliser $C_{\sl_{n}(K)}(X)$,
which has dimension $n-1$ since $X$ is $\sl_{n}(K)$-regular. Thus
the image $[X,\sl_{n}(K)]$ of the map $Y\mapsto[X,Y]$ has dimension
$n^{2}-n$. But if $A\in[X,\sl_{n}(K)]$, there exists a $Y\in\sl_{n}(K)$
such that for every $r=1,\dots,n-1$ we have 
\[
\Tr(X^{r}A)=\Tr(X^{r}(XY-YX))=\Tr(X^{r+1}Y)-\Tr(X^{r}YX)=0.
\]
Thus $[X,\sl_{n}(K)]\subseteq V$. Since $\dim V=\dim[X,\sl_{n}(K)]$
we conclude that $V=[X,\sl_{n}(K)]$. 
\end{proof}
If $S$ is a ring, $I\subseteq S$ an ideal and $X\in\gl_{n}(S)$,
we denote by $X_{I}$ the image of $X$ under the canonical map $\gl_{n}(S)\rightarrow\gl_{n}(S/I)$.
\begin{lem}
\label{lem:surjective}Let $S$ be a local commutative ring (with
identity) with maximal ideal $\mfm$. Let $X\in\sl_{n}(S)$ be such
that $X_{\mfm}$ is $\sl_{n}(S/\mfm)$-regular. Then the canonical
map
\[
C_{\sl_{n}(S)}(X)\longrightarrow C_{\sl_{n}(S/\mfm)}(X_{\mfm})
\]
is surjective.
\end{lem}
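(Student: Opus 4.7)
The plan is to use the hypothesis that $X_{\mfm}$ is $\sl_{n}(S/\mfm)$-regular in two ways: first, to guarantee that every element of $C_{\sl_{n}(S/\mfm)}(X_{\mfm})$ lies in $(S/\mfm)[X_{\mfm}]$ and therefore lifts as a polynomial in $X$, and second, via Lemma~\ref{lem:gl_n-sl_n-regular}(ii), to provide a polynomial expression in $X_{\mfm}$ with nonzero trace that can be used as a correction term.

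More precisely, let $\bar{Z}\in C_{\sl_{n}(S/\mfm)}(X_{\mfm})$. By Lemma~\ref{lem:gl_n-sl_n-regular}(i), $X_{\mfm}$ is $\gl_{n}(S/\mfm)$-regular, so $C_{\gl_{n}(S/\mfm)}(X_{\mfm})=(S/\mfm)[X_{\mfm}]$, and hence $\bar{Z}=\sum_{i=0}^{n-1}\bar{c}_{i}X_{\mfm}^{i}$ for some $\bar{c}_{i}\in S/\mfm$. Choose lifts $c_{i}\in S$ and set $Z'=\sum_{i=0}^{n-1}c_{i}X^{i}\in S[X]\subseteq\gl_{n}(S)$. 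Then $Z'$ commutes with $X$ and $Z'_{\mfm}=\bar{Z}$, but a priori $Z'\notin\sl_{n}(S)$; all we know is that $\tr(Z')\in\mfm$, since $\tr(\bar{Z})=0$.

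To kill this residual trace, I invoke Lemma~\ref{lem:gl_n-sl_n-regular}(ii): since $X_{\mfm}$ is $\sl_{n}(S/\mfm)$-regular, the trace map on $(S/\mfm)[X_{\mfm}]$ is not identically zero, so there exist $\bar{d}_{0},\dots,\bar{d}_{n-1}\in S/\mfm$ with $\tr\bigl(\sum\bar{d}_{i}X_{\mfm}^{i}\bigr)\neq0$ in $S/\mfm$. Lift to $W=\sum d_{i}X^{i}\in S[X]$; then $\tr(W)$ reduces to a nonzero element of $S/\mfm$, and because $S$ is local, $\tr(W)$ is a unit in $S$. Now set
\[
Z=Z'-\frac{\tr(Z')}{\tr(W)}\,W\in S[X].
\]
This is still in the centraliser of $X$ in $\gl_{n}(S)$; a direct computation gives $\tr(Z)=0$, so $Z\in C_{\sl_{n}(S)}(X)$. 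Finally, since $\tr(Z')\in\mfm$ and $\tr(W)$ is a unit, the correction term lies in $\mfm\cdot\gl_{n}(S)$, so $Z_{\mfm}=Z'_{\mfm}=\bar{Z}$, establishing surjectivity.

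There is no real obstacle here; the argument is essentially an application of the two-step structure of $\sl_{n}$-regularity encoded in Lemma~\ref{lem:gl_n-sl_n-regular}. The only point requiring some care is ensuring that the correction term reduces to zero modulo $\mfm$, which comes out automatically because the coefficient $\tr(Z')/\tr(W)$ lies in $\mfm$ (using crucially that $S$ is local so that $\tr(W)$ is invertible).
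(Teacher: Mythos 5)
Your proof is correct and follows essentially the same route as the paper's: both lift $\bar{Z}$ as a polynomial in $X$ using $\gl_n$-regularity, produce an element of $S[X]$ whose trace is a unit (the paper normalises its reduction to have trace $1$, you invoke Lemma~\ref{lem:gl_n-sl_n-regular}(ii) directly), and subtract a correction term with coefficient in $\mfm$ so that the reduction mod $\mfm$ is unchanged. No issues.
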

\begin{proof}
As $C_{\sl_{n}(S/\mfm)}(X_{\mfm})$ has dimension $n-1$ and is the
kernel of the trace map $\tr:C_{\gl_{n}(S/\mfm)}(X_{\mfm})\rightarrow S/\mfm$,
this map must be surjective. Thus, there exists an $a\in C_{\gl_{n}(S/\mfm)}(X_{\mfm})$
such that $\tr(a)=1$. Since $X_{\mfm}$ is $\sl_{n}(S/\mfm)$-regular,
it is also $\gl_{n}(S/\mfm)$-regular, so
\[
C_{\gl_{n}(S/\mfm)}(X_{\mfm})=(S/\mfm)[X_{\mfp}].
\]
Let $\hat{a}\in S[X]$ be any lift of $a$. Then $\tr(\hat{a})\in1+\mfm$,
so $\tr(\hat{a})$ is a unit since $S$ is a local ring. Since $S[X]\subseteq C_{\gl_{n}(S)}(X)$,
we conclude that the trace map $\tr:C_{\gl_{n}(S)}(X)\rightarrow S$
is surjective. The surjectivity of the map in the lemma is now a formal
consequence. Indeed, let $b\in C_{\sl_{n}(S/\mfm)}(X_{\mfm})\subseteq(S/\mfm)[X]$,
and take a lift $\hat{b}\in S[X]$ of $b$. Then $\tr(\hat{b})\in\mfm$,
so there exists a $c\in\mfm C_{\gl_{n}(S)}(X)$ such that $\tr(c)=\tr(\hat{b})$
(namely let $c=\tr(\hat{b})c'$, where $\tr(c')=1$). Then $\hat{b}-c\in C_{\sl_{n}(S)}(X)$,
and the image of $\hat{b}-c$ in $C_{\sl_{n}(S/\mfm)}(X_{\mfm})$
is $b$.
\end{proof}
The following result is a local version of the criterion of Laffey
and Reams (\cite[Proposition~3.3]{commutatorsPID-2016}), with the
difference that we need $X_{\mfp}$ to be $\sl_{n}(R/\mfp)$-regular
to ensure that $Y\in\sl_{n}(R)$ rather than just in $\gl_{n}(R)$.
\begin{prop}
\label{prop:Criterion}Assume that $R$ is a local PID with maximal
ideal $\mfp$, let $A\in\sl_{n}(R)$ and let $X\in\sl_{n}(R)$ be
such that $X_{\mfp}$ is $\sl_{n}(R/\mfp)$-regular. Then $A=[X,Y]$
for some $Y\in\sl_{n}(R)$ if and only if $\Tr(X^{r}A)=0$ for $r=1,\dots,n-1$.
\end{prop}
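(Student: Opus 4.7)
My plan is to handle the two directions separately. The forward direction should be immediate from cyclicity of the trace: if $A = [X, Y]$ with $Y \in \sl_n(R)$, then for each $r \geq 1$,
\[
\Tr(X^r A) = \Tr(X^{r+1} Y) - \Tr(X^r Y X) = 0.
\]
For the converse, I would introduce the $R$-submodule $V = \{B \in \sl_n(R) \mid \Tr(X^r B) = 0 \text{ for } r = 1, \dots, n-1\}$ of $\sl_n(R)$ together with the $R$-linear map $\phi \colon \sl_n(R) \to \sl_n(R)$, $\phi(Y) = [X, Y]$. The forward direction already shows $\phi(\sl_n(R)) \subseteq V$, so the real task is to prove equality. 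The strategy I would pursue is Nakayama's lemma applied to the finitely generated $R$-module $V/\phi(\sl_n(R))$, for which it suffices to show $V \subseteq \phi(\sl_n(R)) + \mfp V$.

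To establish this containment, given $A \in V$ I would reduce modulo $\mfp$. Writing $K = R/\mfp$, the image $A_\mfp \in \sl_n(K)$ still satisfies $\Tr(X_\mfp^r A_\mfp) = 0$, and since $X_\mfp$ is $\sl_n(K)$-regular by hypothesis, Proposition~\ref{prop:LF-criterion-fields} produces $\bar Y \in \sl_n(K)$ with $[X_\mfp, \bar Y] = A_\mfp$. Lifting $\bar Y$ to $Y \in \sl_n(R)$ via the surjection $\sl_n(R) \twoheadrightarrow \sl_n(K)$ (available since $\sl_n(R)$ is a free direct summand of $\gl_n(R)$), the difference $A - [X, Y]$ lies in $V \cap \mfp \sl_n(R)$.

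The step I expect to be the only real subtlety is the identity $V \cap \mfp \sl_n(R) = \mfp V$, needed to upgrade $A - [X, Y]$ to an element of $\mfp V$. It is trivial when $\mfp = 0$; when $R$ is a DVR with uniformizer $\pi$, it follows from $R$ being a domain, since any $B = \pi B' \in V$ with $B' \in \sl_n(R)$ gives $\pi \Tr(X^r B') = \Tr(X^r B) = 0$ and hence $\Tr(X^r B') = 0$, so $B' \in V$. Once this is in hand, we obtain $V = \phi(\sl_n(R)) + \mfp V$, and Nakayama's lemma forces $\phi(\sl_n(R)) = V$, producing the required $Y' \in \sl_n(R)$ with $[X, Y'] = A$.
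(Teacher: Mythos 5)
Your proof is correct, but it takes a genuinely different route from the paper's. The paper also reduces to Proposition~\ref{prop:LF-criterion-fields}, but applies it at the \emph{generic} point: it first argues that $X$ is $\sl_{n}(F)$-regular over the fraction field $F$ (combining $\gl_{n}(F)$-regularity with $\tr(F[X])\neq0$, the latter pulled up from the residue field), obtains $M\in\sl_{n}(F)$ with $[X,M]=A$, clears denominators to get $[X,C]=p^{m}A$ with $C\in\sl_{n}(R)$, and then runs a descent on the minimal such $m$; the key step in that descent is Lemma~\ref{lem:surjective}, the surjectivity of $C_{\sl_{n}(R)}(X)\rightarrow C_{\sl_{n}(R/\mfp)}(X_{\mfp})$, which lets one replace $C$ by $pD$ and cancel a factor of $p$. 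You instead apply the field criterion at the \emph{closed} point $K=R/\mfp$ and package the lifting as Nakayama's lemma for the finitely generated module $V/[X,\sl_{n}(R)]$; the only points needing care --- that $V$ is finitely generated (Noetherianity of the PID $R$), that $\sl_{n}(R)\twoheadrightarrow\sl_{n}(K)$, and that $V\cap\mfp\,\sl_{n}(R)=\mfp V$ (dividing by a uniformizer in a domain) --- are all handled correctly, including the degenerate case $\mfp=0$. Your version buys a cleaner, purely module-theoretic argument that bypasses both Lemma~\ref{lem:surjective} and the passage to the fraction field, while the paper's version is a more hands-on successive-division argument; the two are of essentially the same depth, and both ultimately rest on the dimension count in Proposition~\ref{prop:LF-criterion-fields} together with the $\sl_{n}(K)$-regularity of $X_{\mfp}$.
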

\begin{proof}
Clearly the condition $\Tr(X^{r}A)=0$ for all $r\geq1$ is necessary
for $A$ to be of the form $[X,Y]$ with $Y\in\sl_{n}(R)$. Conversely,
suppose that $\Tr(X^{r}A)=0$ for $r=1,\dots,n-1$. We claim that
$X$ is $\sl_{n}(F)$-regular, considered as an element of $\sl_{n}(F)$.
Indeed, by \cite[Proposition~2.6]{commutatorsPID-2016} $X$ is $\gl_{n}(F)$-regular,
and since $X_{\mfp}$ is $\sl_{n}(R/\mfp)$-regular, there exists
an element $a\in R[X]$ such that $\tr(a)\neq0$. Thus $\tr(F[X])\neq0$,
and so $X$ is $\sl_{n}(F)$-regular by Lemma~\ref{lem:gl_n-sl_n-regular}.

Now, by Proposition~\ref{prop:LF-criterion-fields} we have $A=[X,M]$
for some $M\in\sl_{n}(F)$. Let $p$ be a generator of $\mfp$. Then
there exists a non-negative integer $m$ such that $p^{m}M\in\sl_{n}(R)$,
and we have $[X,p^{m}M]=p^{m}[X,M]=p^{m}A$. Choose $m$ to be minimal
with respect to the property that $[X,C]=p^{m}A$ for some $C\in\sl_{n}(R)$.
Assume that $m>0$. Then $[X_{\mfp},C_{\mfp}]=0$, so $X_{\mfp}$
commutes with $C_{\mfp}$. Since $X_{\mfp}$ is $\sl_{n}(R/\mfp)$-regular,
there exists a $\hat{C}\in C_{\sl_{n}(R)}(X)$ such that $\hat{C}_{\mfp}=C_{\mfp}$,
by Lemma~\ref{lem:surjective}. Thus $C=\hat{C}+pD$, for some $D\in\sl_{n}(R)$,
so 
\[
[X,C]=[X,pD]=p[X,D]=p^{m}A.
\]
Cancelling a factor of $p$, we obtain a contradiction to the minimality
of $m$. Thus $m=0$, and the result is proved.
\end{proof}

\section{\label{sec:The-matricesX}The matrices $X(\bfx,a)$}

Let $S$ be a ring (commutative with identity), $n\geq3$, $\bfx=(x_{1},\dots,x_{n-1})^{\mathsf{{T}}}\in S^{n-1}$
and $a\in S$. The key to our main result is to consider the following
matrices:$$X(\bfx,a)=
\begin{tikzpicture}[baseline=(current bounding box.center), 
column sep=\mtxcolsep,     row sep=\mtxrowsep,
every left delimiter/.style={xshift=9pt},     every right delimiter/.style={xshift=-5pt}]
\matrix (m) [matrix of math nodes,nodes in empty cells,right delimiter=),left delimiter=(]
{0 & 0 & 0 &  & 0\\
 x_{1} & 0 & 1 &  & \\
 &   & 0 &  & 0\\
 & 0 &  &  & 1\\
x_{n-1} & a & 0 &  & 0 \\ };

\draw[loosely dotted,thick] (m-3-3)-- (m-5-5); 
\draw[loosely dotted,thick] (m-2-1)-- (m-5-1); 
\draw[loosely dotted,thick] (m-1-3)-- (m-3-5); 
\draw[loosely dotted,thick] (m-1-3)-- (m-1-5); 
\draw[loosely dotted,thick] (m-1-5)-- (m-3-5); 
\draw[loosely dotted,thick] (m-3-3)-- (m-5-3); 
\draw[loosely dotted,thick] (m-2-3)-- (m-4-5);
\draw[loosely dotted,thick] (m-2-2)-- (m-4-2);
\draw[loosely dotted,thick] (m-5-3)-- (m-5-5);
\end{tikzpicture}
\in \sl_n(S),$$
that is, $X(\bfx,a)=(m_{ij})$, where$$\begin{cases}
m_{i,i+1}=1 & \text{for }i=2,\dots,n-1,\\ m_{i1}=x_{i-1} & \text{for }i=2,\dots,n-2,\\
m_{n,2}=a & \\
m_{ij}=0 & \text{otherwise}. \end{cases}$$We can write $X(\bfx,a)$ in block form as
\[
X(\bfx,a)=\begin{pmatrix}0 & \overline{0}\\
\bfx & P
\end{pmatrix},
\]
where $\overline{0}=(0,\dots,0)$ is a $1\times n$ matrix and $P=(p_{ij})$,
$1\leq i,j\leq n-1$, where $p_{i,i+1}=1$ for $i=1,\dots,n-2$, $p_{n-1,1}=a$
and $p_{ij}=0$ otherwise. Thus, $P$ is the (row-wise) companion
matrix of the polynomial $x^{n-1}-a$.
\begin{lem}
\label{lem:Pr-1-trace}Let $P\in\sl_{n-1}(S)$ be as above, and let
$\bfy=(y_{1},\dots,y_{n-1})^{\mathsf{{T}}}\in S^{n-1}$. Then, for
any $z\in S$, and $r=1,\dots,n-1$, we have
\[
\tr(P^{r-1}\bfy(z,0,\dots,0))=zy_{r}.
\]
\end{lem}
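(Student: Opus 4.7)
The plan is to rewrite the argument of the trace as a rank-one matrix and then use the cyclic property of the trace to reduce to a single scalar entry of a vector. Writing $\bfe_1 = (1,0,\dots,0)^{\mathsf{T}} \in S^{n-1}$, the row vector $(z,0,\dots,0)$ equals $z\bfe_1^{\mathsf{T}}$, so
\[
P^{r-1}\bfy(z,0,\dots,0) = z\,P^{r-1}\bfy\,\bfe_1^{\mathsf{T}}.
\]
By the cyclic property of the trace,
\[
\tr\bigl(P^{r-1}\bfy\,\bfe_1^{\mathsf{T}}\bigr) \;=\; \tr\bigl(\bfe_1^{\mathsf{T}}P^{r-1}\bfy\bigr) \;=\; \bfe_1^{\mathsf{T}}P^{r-1}\bfy \;=\; (P^{r-1}\bfy)_1,
\]
so everything reduces to computing the first coordinate of $P^{r-1}\bfy$.

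Next I would exploit that $P$ is the companion matrix of $x^{n-1}-a$, which acts as a cyclic shift: for any $\bfv=(v_1,\dots,v_{n-1})^{\mathsf{T}}$, one has $(P\bfv)_i = v_{i+1}$ for $i=1,\dots,n-2$, and $(P\bfv)_{n-1}=av_1$. Iterating, an easy induction on $k$ shows that $(P^{k}\bfv)_1 = v_{1+k}$ as long as $1+k\leq n-1$, i.e.\ $k\leq n-2$; the constant $a$ does not enter the first coordinate in this range because the shift has not yet wrapped around. Applying this with $\bfv=\bfy$ and $k=r-1$ for $r=1,\dots,n-1$ gives $(P^{r-1}\bfy)_1 = y_r$.

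Combining the two steps yields $\tr(P^{r-1}\bfy(z,0,\dots,0)) = z\,(P^{r-1}\bfy)_1 = z y_r$, as required. The only thing to watch is the range of $r$: the identity breaks down at $r=n$, where the shift would wrap around and an $a$-factor would appear, but the statement is restricted precisely to $r\leq n-1$, so no obstacle arises. There is no substantive difficulty in this lemma; it is a routine unpacking of a rank-one trace together with the shift action of the companion matrix.
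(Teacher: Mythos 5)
Your proof is correct and follows essentially the same route as the paper: both reduce the trace of the rank-one product to the first row of $P^{r-1}$ applied to $\bfy$ (you via cyclicity of the trace, the paper by noting that all columns of $\bfy(z,0,\dots,0)$ but the first vanish), and then identify that row as $\bfe_r^{\mathsf{T}}$ from the shift structure of the companion matrix. Your explicit computation of the shift action of $P$ is, if anything, slightly more concrete than the paper's appeal to the cyclic basis $\{v,Pv,\dots,P^{n-2}v\}$.
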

\begin{proof}
Write $P^{r-1}=(p_{ij}^{(r-1)})$, for $1\leq i,j\leq n-1$. Since
each column in $\bfy(z,0,\dots,0)$, except for the first one, is
zero, we have 
\[
\tr(P^{r-1}\bfy(z,0,\dots,0))=(p_{11}^{(r-1)},p_{12}^{(r-1)},\dots,p_{1,n-1}^{(r-1)})z\bfy.
\]
Since $P$ is a companion matrix, there exists a $v\in S^{n-1}$ such
that $\{v,Pv,\dots,P^{n-2}v\}$ is an $S$-basis for $S^{n-1}$ and
$P$ is the matrix of the linear map defined by $P$ with respect
to this basis. Thus, for each $r=1,\dots,n-1$, the first row of $P^{r-1}$
is $(p_{11}^{(r-1)},p_{12}^{(r-1)},\dots,p_{1,n-1}^{(r-1)})$, where
$p_{1r}^{(r-1)}=1$ and all other $p_{1j}=0$. Hence
\[
(p_{11}^{(r-1)},p_{12}^{(r-1)},\dots,p_{1,n-1}^{(r-1)})z\bfy=zy_{r},
\]
and the lemma follows.
\end{proof}
\begin{lem}
\label{lem:X_n(a,b)-powers}For $r=1,\dots,n-1$ we have
\[
X(\bfx,a)^{r}=\begin{pmatrix}0 & \overline{0}\\
P^{r-1}\bfx & P^{r}
\end{pmatrix},
\]
 In particular, $\tr(X(\bfx,a)^{r})=0$ for $r=1,\dots,n-2$, and
$\tr(X(\bfx,a)^{n-1})=(n-1)a$.
\end{lem}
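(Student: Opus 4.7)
The plan is to prove the formula for $X(\bfx,a)^{r}$ by straightforward induction on $r$, exploiting the block decomposition introduced just before the lemma. The base case $r=1$ is the definition. For the inductive step, I would write
\[
X(\bfx,a)^{r+1} = X(\bfx,a)^{r} \cdot X(\bfx,a) = \begin{pmatrix} 0 & \overline{0} \\ P^{r-1}\bfx & P^{r} \end{pmatrix} \begin{pmatrix} 0 & \overline{0} \\ \bfx & P \end{pmatrix}
\]
and multiply out blockwise: the first row stays zero, the $(2,1)$-block becomes $P^{r-1}\bfx \cdot 0 + P^{r}\cdot \bfx = P^{r}\bfx$, and the $(2,2)$-block becomes $P^{r-1}\bfx\cdot\overline{0} + P^{r}\cdot P = P^{r+1}$. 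The induction goes through as long as $r+1\le n-1$, which is exactly the range claimed.

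From the block formula, $\tr(X(\bfx,a)^{r}) = \tr(P^{r})$, so the trace statements reduce to computing $\tr(P^{r})$, where $P$ is the companion matrix of $t^{n-1}-a$. I would compute this directly from the action of $P$ on the standard basis: $Pe_{j}=e_{j-1}$ for $2\le j\le n-1$ and $Pe_{1}=ae_{n-1}$. For $1\le r\le n-2$ one sees that $P^{r}e_{j}\in Se_{k}$ with $k\ne j$ for every $j$, so no diagonal entry of $P^{r}$ can be nonzero, giving $\tr(P^{r})=0$. For $r=n-1$ the cycle closes to $P^{n-1}e_{j}=ae_{j}$ for every $j$, so $\tr(P^{n-1})=(n-1)a$. (Alternatively, the characteristic polynomial of $P$ is $t^{n-1}-a$, so by Newton's identities applied with $e_{1}=\dots=e_{n-2}=0$ and $e_{n-1}=(-1)^{n}a$, the power sums $p_{r}=\tr(P^{r})$ vanish for $1\le r\le n-2$ and $p_{n-1}=(n-1)a$.)

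There is no real obstacle here; the point of the lemma is that the particular shape of $X(\bfx,a)$ is tailored so that the first row being zero is preserved under all powers up to $n-1$, reducing everything to an analysis of the single companion block $P$. I would take care only to note explicitly where the bound $r\le n-1$ enters (at $r=n$ the zero top row is destroyed, because $P^{n-1}\bfx$ would have to be fed back through the first column of $X(\bfx,a)$, producing nonzero entries in the first row).
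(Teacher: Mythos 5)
Your argument is correct and is essentially the paper's proof: a block-multiplication induction for the power formula, followed by an explicit determination of $P^{r}$ showing its diagonal vanishes for $r\leq n-2$ and that $P^{n-1}=a1_{n-1}$ (the paper reads the latter off the characteristic polynomial $t^{n-1}-a$ via Cayley--Hamilton, you read it off the action on the standard basis; these are the same computation). One small correction to your closing parenthetical: nothing goes wrong at $r=n$. Since the entire first row of $X(\bfx,a)$ is zero, the first row of $X(\bfx,a)^{r}\cdot X(\bfx,a)$ is zero for every $r$, so the block formula $X(\bfx,a)^{r}=\left(\begin{smallmatrix}0 & \overline{0}\\ P^{r-1}\bfx & P^{r}\end{smallmatrix}\right)$ in fact holds for all $r\geq1$; the nonzero first column of $X(\bfx,a)$ only ever feeds into the lower-left block. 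The restriction $r\leq n-1$ is relevant only to the trace statements (and is where the diagonal of $P^{r}$ first becomes nonzero), not to the validity of the induction.
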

\begin{proof}
The expression for $X(\bfx,a)^{r}$ follows easily, using block-multiplication
of matrices. The assertion about the trace of $X(\bfx,a)^{r}$ for
$r=1,\dots,n-2$ follows from a simple induction argument, proving
that for each $r=1,\dots,n-2$, we have $P^{r}=(p_{ij}^{(r)})$, where
$p_{i,i+r}^{(r)}=1$ for $i=1,\dots,n-1-r$ and $p_{n-1-r+j,j}^{(r)}=a$
for $j=1,\dots,r$, and $p_{ij}^{(r)}=0$ otherwise. Finally, the
relation $\tr(X(\bfx,a)^{n-1})=(n-1)a$ follows from the fact that
the characteristic polynomial of $P$ is $x^{n-1}-a$.
\end{proof}
\begin{lem}
\label{lem:X_n(a,b)-regular}Let $K$ be a field, $x_{1},\dots,x_{n-1}\in K^{n-1}$
and $a\in K$. If either $x_{n-1}\neq0$ or $a\neq0$, then $X(\bfx,a)$
is $\gl_{n}(K)$-regular. If $a\neq0$, then $X(\bfx,a)$ is $\sl_{n}(K)$-regular.
\end{lem}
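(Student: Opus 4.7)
The plan is to establish $\gl_n(K)$-regularity by showing that $\{1_n,X(\bfx,a),\dots,X(\bfx,a)^{n-1}\}$ is $K$-linearly independent; since an $n\times n$ matrix has characteristic polynomial of degree $n$, this will force the minimal polynomial to coincide with the characteristic polynomial, which is the definition of $\gl_n(K)$-regularity. For the $\sl_n(K)$-regularity claim I will invoke Lemma~\ref{lem:gl_n-sl_n-regular}~(ii), which reduces the task to verifying $\tr(K[X(\bfx,a)])\neq 0$.

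For the linear independence, suppose $\sum_{r=0}^{n-1}c_r X(\bfx,a)^r=0$ and apply the power formula of Lemma~\ref{lem:X_n(a,b)-powers} block by block. The $(1,1)$-entry of the relation immediately gives $c_0=0$, and the lower-right $(n-1)\times(n-1)$ block yields $\sum_{r=1}^{n-1}c_r P^r=0$. Since $P$ is the companion matrix of $x^{n-1}-a$, one has $P^{n-1}=aI_{n-1}$, while $\{I_{n-1},P,\dots,P^{n-2}\}$ is $K$-linearly independent, so the relation collapses to $c_1=\dots=c_{n-2}=0$ together with $ac_{n-1}=0$. When $a\neq 0$ this forces $c_{n-1}=0$ at once. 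When $a=0$ and $x_{n-1}\neq 0$, I will extract the remaining information from the lower-left block, which now reads $c_{n-1}P^{n-2}\bfx=0$; using that $P$ sends the standard basis vector $e_j$ to $e_{j-1}$ for $j\geq 2$ and sends $e_1$ to $ae_{n-1}=0$, a direct computation gives $P^{n-2}\bfx=x_{n-1}e_1$, whence $c_{n-1}=0$ again.

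For the $\sl_n(K)$-regularity when $a\neq 0$, Lemma~\ref{lem:X_n(a,b)-powers} supplies $\tr(1_n)=n$ and $\tr(X(\bfx,a)^{n-1})=(n-1)a$, both elements of $K[X(\bfx,a)]$. Because $n$ and $n-1$ are coprime integers, they cannot both vanish in $K$: either $n\neq 0$ in $K$ and $\tr(1_n)\neq 0$, or $n-1\neq 0$ in $K$, whence $(n-1)a\neq 0$ since $a\neq 0$. In either case $\tr(K[X(\bfx,a)])\neq 0$, and Lemma~\ref{lem:gl_n-sl_n-regular}~(ii) completes the proof.

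No serious obstacle is anticipated, since the entire argument is linear algebra on the explicit block form supplied by Lemma~\ref{lem:X_n(a,b)-powers}; the only mildly delicate features are the case split $a\neq 0$ versus $a=0,\,x_{n-1}\neq 0$ in the first part, and the coprimality observation that rescues the trace computation in positive characteristic in the second part.
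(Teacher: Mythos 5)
Your proof is correct and follows essentially the same route as the paper: establish $\gl_n(K)$-regularity by showing $\{1_n,X,\dots,X^{n-1}\}$ is linearly independent via the block formula of Lemma~\ref{lem:X_n(a,b)-powers} (with the same case split on $a\neq0$ versus $a=0,\ x_{n-1}\neq0$, and the same computation $P^{n-2}\bfx=x_{n-1}e_1$ when $a=0$), then deduce $\sl_n(K)$-regularity from Lemma~\ref{lem:gl_n-sl_n-regular} using $\tr(X^{n-1})=(n-1)a$. Your coprimality observation that $n$ and $n-1$ cannot both vanish in $K$ is just a compact repackaging of the paper's case split on whether $\chara K$ divides $n$.
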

\begin{proof}
For simplicity, write $X=X(\bfx,a)$. We will show that if $x_{n-1}\neq0$
or $a\neq0$, then $X$ is $\gl_{n}(K)$-regular, by showing that
$\{1_{n},X,\dots,X^{n-1}\}$ is linearly independent. Lemma~\ref{lem:X_n(a,b)-powers}
implies that $\{1_{n},X,\dots,X^{n-2}\}$ is linearly independent
because $P$ is regular, so $\{1_{n-1},P,\dots,P^{n-2}\}$ is linearly
independent. Moreover, by Lemma~\ref{lem:X_n(a,b)-powers} and its
proof, we have 
\[
X^{n-1}=\begin{pmatrix}0 & \overline{0}\\
P^{n-2}\bfx & a1_{n-1}
\end{pmatrix},\qquad\text{where}\qquad P^{n-2}\bfx=\begin{pmatrix}x_{n-1}\\
ax_{1}\\
\vdots\\
ax_{n-2}
\end{pmatrix}.
\]
Thus, since $P^{i}$ has zero diagonal for all $r=1,\dots,n-2$ (see
the proof of Lemma~\ref{lem:X_n(a,b)-powers}), we conclude that
$X^{n-1}$ is not a linear combination of $1_{n},X,\dots,X^{n-2}$
if $a\neq0$. On the other hand, if $a=0$ and $x_{n-1}\neq0$, then
$X^{n-1}$ is the matrix whose $(2,1)$-entry is $x_{n-1}$ and all
other entries are zero. Since each matrix in $\{1_{n},X,\dots,X{}^{n-2}\}$
has a non-zero $(i,j)$-entry for some $(i,j)\neq(2,1)$, we conclude
that $X^{n-1}$ is not a linear combination of $1_{n},X,\dots,X{}^{n-2}$
if $a=0$ and $x_{n-1}\neq0$.

Suppose now that $a\neq0$; then $X$ is $\gl_{n}(K)$-regular. If
$\chara K\nmid n$, Lemma~\ref{lem:gl_n-sl_n-regular} implies that
$X$ is $\sl_{n}(K)$-regular. On the other hand, if $\chara K\mid n$,
then 
\[
\tr(X{}^{n-1})=(n-1)a=-a,
\]
by Lemma~\ref{lem:X_n(a,b)-powers}, so $\tr(K[X])\neq0$ and Lemma~\ref{lem:gl_n-sl_n-regular}
implies that $X$ is $\sl_{n}(K)$-regular.
\end{proof}

\section{The field case}

In this section we give a proof of our main result in the case where
$R=K$ is a field. We give a separate proof in this case, as it is
simpler than for a general PID. The result over a field was first
proved by Thompson \cite{Thompson-tracezero}, who also showed that,
apart for some small exceptions, one of the matrices $X$ can in fact
be taken to be nilpotent. We give a new proof of Thompson's result,
but instead of showing that $X$ can be chosen to be nilpotent, we
show that it can be taken to be $\gl_{n}(K)$-regular (and often $\sl_{n}(K)$-regular).

First let $n=2$. For $x,y,z,s,t,u\in K$ we have
\[
\left[\begin{pmatrix}x & y\\
z & -x
\end{pmatrix},\begin{pmatrix}s & t\\
u & -s
\end{pmatrix}\right]=\begin{pmatrix}uy-tz & 2(tx-sy)\\
2(sz-ux) & tz-uy
\end{pmatrix}.
\]
Thus, if $\chara K=2$, a matrix in $\sl_{2}(K)$ is of the form $[X,Y]$
for $X,Y\in\sl_{2}(K)$ if and only if it is scalar. On the other
hand, if $\chara K\geq3$ and $a,b,c\in K$, then 
\[
\begin{pmatrix}a & b\\
c & -a
\end{pmatrix}=\begin{cases}
\left[\left(\begin{smallmatrix}0 & 1\\
-\frac{c}{b} & 0
\end{smallmatrix}\right),\left(\begin{smallmatrix}-\frac{b}{2} & 0\\
a & \frac{b}{2}
\end{smallmatrix}\right)\right] & \text{if }b\neq0,\vspace{5pt}\\
\left[\left(\begin{smallmatrix}0 & 0\\
1 & 0
\end{smallmatrix}\right),\left(\begin{smallmatrix}\frac{c}{2} & -a\\
0 & -\frac{c}{2}
\end{smallmatrix}\right)\right] & \text{if }b=0.
\end{cases}
\]
Note that all of the matrices involved in the above commutators are
$\gl_{n}(K)$-regular. 
\begin{lem}
\label{lem:Identity-commutator}Let $S$ be a ring (commutative with
identity) such that $n=1+\dots+1=0$ in $S$. Then, for every $\lambda\in S$
there exist $X,Y\in\sl_{n}(S)$ such that $X$ is $\gl_{n}(S)$-regular
and $[X,Y]=\lambda1_{n}$.
\end{lem}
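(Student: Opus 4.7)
The plan is to construct $X$ and $Y$ explicitly: take $X$ to be the standard nilpotent upper Jordan block $X=\sum_{i=1}^{n-1}E_{i,i+1}$ and let $Y=\sum_{i=1}^{n-1}i\lambda E_{i+1,i}$. Both matrices are clearly in $\sl_{n}(S)$: the first has zero diagonal, and the second is strictly lower triangular. No trace-adjustment is needed.

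The first thing to check is $\gl_{n}(S)$-regularity of $X$, by which I mean $C_{\gl_{n}(S)}(X)=S[X]$ with $\{1_{n},X,\dots,X^{n-1}\}$ free over $S$. Using $E_{ab}E_{cd}=\delta_{bc}E_{ad}$, the condition $XM=MX$ on a matrix $M=(m_{ij})$ translates into $m_{i+1,j}=m_{i,j-1}$ for all $i,j$, with the convention $m_{n+1,j}=m_{i,0}=0$. A short inspection shows this forces $M$ to be upper triangular with constant diagonals, i.e.\ upper triangular Toeplitz. Hence $C_{\gl_{n}(S)}(X)=S\cdot 1_{n}+S\cdot X+\cdots+S\cdot X^{n-1}$, and the powers $X^{k}=\sum_{i=1}^{n-k}E_{i,i+k}$ have pairwise disjoint supports, so they form a free $S$-basis of $C_{\gl_{n}(S)}(X)$.

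For the commutator computation, a direct expansion using $E_{ab}E_{cd}=\delta_{bc}E_{ad}$ shows that both products $XY$ and $YX$ are diagonal, with $XY=\sum_{k=1}^{n-1}k\lambda E_{kk}$ and $YX=\sum_{k=2}^{n}(k-1)\lambda E_{kk}$. Subtracting, the interior diagonal entries telescope to $\lambda(E_{11}+E_{22}+\cdots+E_{n-1,n-1})$, while the $(n,n)$-entry contributes $-(n-1)\lambda$, so $[X,Y]=\lambda\cdot 1_{n}-n\lambda E_{nn}$. The hypothesis $n\cdot 1_{S}=0$ then kills the correction term $n\lambda E_{nn}$, yielding $[X,Y]=\lambda 1_{n}$ exactly.

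There is no real obstacle once the correct ansatz is chosen; the verification is bookkeeping. The hypothesis $n\cdot 1_{S}=0$ is used in precisely one place, at the $(n,n)$-entry, where the telescoping breaks because there is no $E_{n,n+1}$-term in $X$ and no $E_{n+1,n}$-term in $Y$. This is consistent with necessity: any scalar $\lambda 1_{n}$ written as $[X,Y]$ must have zero trace, so one always needs $n\lambda=0$.
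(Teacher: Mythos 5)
Your proof is correct and is essentially the paper's own argument: the paper takes the same nilpotent Jordan block $X$ and the same lower-triangular $Y$ with entries $y_{j+1,j}=j$, verifies $[X,Y]=1_{n}$ using $-(n-1)=1$ in $S$, and then scales $Y$ by $\lambda$; your version merely folds the factor $\lambda$ into $Y$ from the start and spells out the Toeplitz centraliser computation that the paper summarises as ``$X$ is a companion matrix, hence regular.''
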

\begin{proof}
Take $X=(x_{ij})$, where $x_{i,i+1}=1$ for $i=1,\dots,n-1$ and
$x_{ij}=0$ otherwise, and $Y=(y_{ij})$, where $y_{j+1,j}=j$, for
$j=1,\dots,n-1$ and $y_{ij}=0$ otherwise. Then $X$ is a companion
matrix, hence regular as an element of $\gl_{n}(S)$. A direct computation
shows that $[X,Y]=1_{n}$, because $-(n-1)=1$ in $S$, and thus $[X,\lambda Y]=\lambda1_{n}$.
\end{proof}
\begin{rem}
\label{rem:not-sl_n-reg}If $S=K$ is a field, Lemma~\ref{lem:Identity-commutator}
does not hold if $X$ is required to be $\sl_{n}(K)$-regular; in
fact, the $X$ in the lemma is necessarily not $\sl_{n}(K)$-regular,
unless $\lambda=0$. The author was alerted to the following simple
argument by a referee: Suppose that $[X,Y]=\lambda1_{n}$ where $\lambda\neq0$
and $X$ is $\gl_{n}(K)$-regular. Then $\tr(X^{i}\lambda1_{n})=\lambda\tr(X^{i})=0$,
hence $\tr(X^{i})=0$, for all $i=0,\dots,n-1$. Thus $X$ is not
$\sl_{n}(K)$-regular, by Lemma~\ref{lem:gl_n-sl_n-regular}.
\end{rem}
\begin{thm}
\label{thm:Main-fields}Let $K$ be a field and $A\in\sl_{n}(K)$,
with $n\geq3$. Then there exist $X,Y\in\sl_{n}(K)$  such that $[X,Y]=A$.
Moreover, if $A$ is scalar, $X$ can be chosen to be $\gl_{n}(K)$-regular
and if $A$ is non-scalar, $X$ can be chosen to be $\sl_{n}(K)$-regular.
\end{thm}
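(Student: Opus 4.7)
The plan is to split into the scalar and non-scalar cases for $A$. When $A=\lambda 1_n$ is scalar, the trace-zero condition forces $n\lambda=0$. If $\lambda=0$, take $X=X(\bfx,0)$ with $x_{n-1}\neq 0$, which is $\gl_n(K)$-regular by Lemma~\ref{lem:X_n(a,b)-regular}, and $Y=0$. If $\lambda\neq 0$, then $\chara K\mid n$, so Lemma~\ref{lem:Identity-commutator} supplies $X,Y\in\sl_n(K)$ with $X$ being $\gl_n(K)$-regular and $[X,Y]=\lambda 1_n$. As Remark~\ref{rem:not-sl_n-reg} records, $\sl_n(K)$-regularity of $X$ cannot be forced in this case, matching the theorem's statement.

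For non-scalar $A$, the first step is a $\GL_n(K)$-conjugation to a convenient normal form. Since $A^{\mathsf{T}}$ is also non-scalar, there exists $v\in K^n$ with $A^{\mathsf{T}}v\notin Kv$; extending $\{v,A^{\mathsf{T}}v\}$ to a basis of $K^n$ and transposing the resulting change of basis shows that $A$ is $\GL_n(K)$-conjugate to a matrix whose first row is $(0,1,0,\ldots,0)$. Since $\GL_n(K)$-conjugation preserves trace zero, commutators, and $\sl_n(K)$-regularity, I may assume $A$ itself has this form, and write $A=\bigl(\begin{smallmatrix}0 & \bfu^{\mathsf{T}}\\ \bfv & B\end{smallmatrix}\bigr)$ in the $1+(n-1)$ block decomposition, with $\bfu^{\mathsf{T}}=(1,0,\ldots,0)$ and $B\in\gl_{n-1}(K)$.

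The next step is to pick $X=X(\bfx,a)$ so that Proposition~\ref{prop:LF-criterion-fields} applies. Using Lemma~\ref{lem:X_n(a,b)-powers} and block multiplication one computes
\[
\Tr(X(\bfx,a)^r A)=\tr(P^{r-1}\bfx\bfu^{\mathsf{T}})+\tr(P^r B),
\]
and Lemma~\ref{lem:Pr-1-trace} (with $z=1$, $\bfy=\bfx$) evaluates the first summand to $x_r$. The $n-1$ vanishing conditions demanded by Proposition~\ref{prop:LF-criterion-fields} therefore become the explicit uncoupled system
\[
x_r=-\tr(P^r B),\qquad r=1,\ldots,n-1.
\]
Fix any $a\in K^{\times}$, say $a=1$; then $P$ and each $\tr(P^r B)\in K$ are determined, and the $x_r$ are forced. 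Because $a\neq 0$, Lemma~\ref{lem:X_n(a,b)-regular} guarantees that $X(\bfx,a)$ is $\sl_n(K)$-regular, and Proposition~\ref{prop:LF-criterion-fields} then produces $Y\in\sl_n(K)$ with $[X(\bfx,a),Y]=A$.

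The only substantive step is the first-row normal form reduction; once that is in place, the special block structure of the matrices $X(\bfx,a)$ collapses the otherwise-delicate trace vanishing system into a one-at-a-time prescription for the $x_r$, which is precisely the ``atypically simple'' system of equations alluded to in the introduction.
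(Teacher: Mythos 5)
Your proof is correct and follows essentially the same route as the paper: conjugate $A$ so that its first row is $(0,1,0,\ldots,0)$, solve the uncoupled system $x_{r}=-\tr(P^{r}B)$, and conclude via Lemma~\ref{lem:X_n(a,b)-regular} and Proposition~\ref{prop:LF-criterion-fields}. The only (harmless) deviations are that you obtain the normal form by a direct cyclic-vector argument for $A^{\mathsf{T}}$ rather than via the rational canonical form (of which the paper's computation only uses the first row anyway), and that you dispose of the scalar case by noting that a scalar trace-zero $A$ forces $A=0$ or $\chara K\mid n$, rather than citing Proposition~4.1 of the earlier paper.
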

\begin{proof}
Assume first that $A$ is scalar. By \cite[Proposition~4.1]{commutatorsPID-2016},
there exist $X,Y\in\gl_{n}(K)$ with $X$ $\gl_{n}(K)$-regular, such
that $[X,Y]=A$. If $\chara K$ does not divide $n$, then $[X-\frac{\tr(X)}{n}1_{n},Y-\frac{\tr(Y)}{n}1_{n}]=[X,Y]=A$,
where $X-\frac{\tr(X)}{n}1_{n},Y-\frac{\tr(Y)}{n}1_{n}\in\sl_{n}(K)$
and $X-\frac{\tr(X)}{n}1_{n}$ is $\gl_{n}(K)$-regular. On the other
hand, if $\chara K$ divides $n$, then the desired assertion follows
from Lemma~\ref{lem:Identity-commutator}. 

Assume now that $A$ is not scalar and let $A=(a_{ij})$. Then the
rational canonical form implies that after a possible $\GL_{n}(K)$-conjugation,
we can assume that $a_{11}=0$, $a_{12}=1$ and $a_{ij}=0$ whenever
$j\geq i+2$. We will show that $x_{1},\dots,x_{n-1}\in K$ can be
chosen such that $\tr(X(\bfx,1)^{r}A)=0$ for each $r=1,\dots,n-1$.
By Lemma~\ref{lem:X_n(a,b)-powers} we have 
\[
X(\bfx,1)^{r}=\begin{pmatrix}0 & \overline{0}\\
P^{r-1}\bfx & P^{r}
\end{pmatrix},
\]
where $P=(p_{ij})$, $1\leq i,j,\leq n-1$ is such that $p_{i,i+1}=1$
for $i=1,\dots,n-2$, $p_{n-1,1}=1$ and $p_{ij}=0$ otherwise. Writing
$A$ in block-form, we have
\[
A=\begin{pmatrix}0 & (1,0,\dots,0)\\
\bfa & Q
\end{pmatrix},
\]
where $\bfa$ is an $n\times1$ matrix and $Q\in\gl_{n-1}(K)$. Thus
\[
X(\bfx,1)^{r}A=\begin{pmatrix}0 & \overline{0}\\
P^{r}\bfa & Q'
\end{pmatrix},
\]
where $Q'=P^{r-1}\bfx(1,0,\dots,0)+P^{r}Q$. Thus, by Lemma~\ref{lem:Pr-1-trace},
\[
\tr(X(\bfx,1)^{r}A)=\tr(Q')=x_{r}+\tr(P^{r}Q),
\]
for each $r=1,\dots,n-1$. Put $x_{r}=-\tr(P^{r}Q)$, so that $\tr(X(\bfx,1)^{r}A)=0$,
for $r=1,\dots,n-1$. By Lemma~\ref{lem:X_n(a,b)-regular} $X(\bfx,1)$
is $\sl_{n}(K)$-regular, so Proposition~\ref{prop:LF-criterion-fields}
implies that there exists a $Y\in\sl_{n}(K)$ such that
\[
[X(\bfx,1),Y]=A.
\]
\end{proof}
\begin{rem}
Our approach cannot be modified to yield Thompson's result that $X$
can be taken to be nilpotent. The reason for this is that $X(\bfx,a)$
is nilpotent if and only if $P$ is nilpotent if and only if $a=0$.
Therefore, even if $X(\bfx,a)$ is nilpotent and $\gl_{n}(K)$-regular,
it cannot be $\sl_{n}(K)$-regular, because $\tr(X(\bfx,0)^{r})=0$
for every $r=1,\dots,n-1$. 
\end{rem}

\section{Proof of the Main Theorem}

Throughout this section, $R$ is an arbitrary PID with fraction field
$F$. Note that we consider fields as special types of PIDs.

Before proving our main result (Theorem~\ref{thm:MainPIDs} below),
we give a new and simplified proof of the main result in \cite{commutatorsPID-2016}
that any $A\in\sl_{n}(R)$ is a commutator of matrices in $\gl_{n}(R)$.
The proof of our main result is a bit harder, as it involves a special
analysis for certain prime ideals. Both proofs makek essential use
of the Laffey-Reams form and rely on the following key result:
\begin{lem}
\label{lem:tr(XrA)-is-zero}Suppose that $A=(a_{ij})\in\sl_{n}(R)$
is in Laffey-Reams form, that is, $a_{ij}=0$ for $j\geq i+2$ and
$A\equiv a_{11}1_{n}\mod{(a_{12})}$. Then there exists an ${\bf x}=(x_{1},\dots,x_{n-1})^{\mathsf{{T}}}\in R^{n-1}$,
with $x_{n-1}=a_{11}$, such that 
\[
\tr(X(\bfx,a_{12})^{r}A)=0,
\]
for each $r=1,\dots,n-1$.
\end{lem}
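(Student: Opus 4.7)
The plan is to write $A$ in a block form matching $X(\bfx, a_{12})$, reduce the $n-1$ trace conditions to an explicit system in $x_1, \dots, x_{n-1}$, and then exhibit a solution in which $x_{n-1} = a_{11}$.

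When $a_{12} = 0$, the Laffey\textendash Reams condition forces $A = a_{11} 1_n$, and Lemma~\ref{lem:X_n(a,b)-powers} gives $\tr(X(\bfx, 0)^r A) = a_{11} \tr(X(\bfx, 0)^r) = 0$ for all $r = 1, \dots, n-1$, so any $\bfx$ with $x_{n-1} = a_{11}$ works. Hence I assume $a_{12} \neq 0$. Divisibility then gives $A = a_{11} 1_n + a_{12} B$ for some $B \in \gl_n(R)$, and the Laffey\textendash Reams shape of $A$ forces the first row of $B$ to be $(0, 1, 0, \dots, 0)$. Writing $B$ in block form matching $X(\bfx, a_{12})$ as
\[
B = \begin{pmatrix} 0 & (1, 0, \dots, 0) \\ * & B' \end{pmatrix},
\]
a direct block multiplication using Lemma~\ref{lem:X_n(a,b)-powers} gives
\[
\tr(X(\bfx, a_{12})^r B) = \tr(P^{r-1} \bfx (1, 0, \dots, 0)) + \tr(P^r B') = x_r + \tr(P^r B')
\]
for $r = 1, \dots, n-1$, the second equality by Lemma~\ref{lem:Pr-1-trace}. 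Combining with the trace formulas in Lemma~\ref{lem:X_n(a,b)-powers} yields
\[
\tr(X(\bfx, a_{12})^r A) = a_{11} \tr(X(\bfx, a_{12})^r) + a_{12}\bigl(x_r + \tr(P^r B')\bigr).
\]

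For $r = 1, \dots, n-2$ the first summand vanishes, so setting $x_r = -\tr(P^r B')$ kills the $r$-th trace. This leaves $x_{n-1}$ free, and I must verify that the final equation, $r = n - 1$, holds with $x_{n-1} = a_{11}$. The key observation is that $P$ is the companion matrix of $t^{n-1} - a_{12}$, so Cayley\textendash Hamilton gives $P^{n-1} = a_{12} \cdot 1_{n-1}$ and hence $\tr(P^{n-1} B') = a_{12} \tr(B')$. Combined with the trace-zero condition $\tr(A) = n a_{11} + a_{12} \tr(B') = 0$, we get $a_{12} \tr(B') = -n a_{11}$. Substituting $x_{n-1} = a_{11}$ and $\tr(X(\bfx, a_{12})^{n-1}) = (n-1) a_{12}$ from Lemma~\ref{lem:X_n(a,b)-powers},
\[
\tr(X(\bfx, a_{12})^{n-1} A) = a_{12}\bigl[n a_{11} + a_{12} \tr(B')\bigr] = a_{12}\bigl[n a_{11} - n a_{11}\bigr] = 0.
\]

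The main obstacle is really this last compatibility: at first glance the $n-1$ trace conditions plus the constraint $x_{n-1} = a_{11}$ look overdetermined, but the Cayley\textendash Hamilton relation $P^{n-1} = a_{12} \cdot 1_{n-1}$ together with the trace-zero property of $A$ conspire to make the whole system consistent, and this is precisely the atypically simple feature of the matrices $X(\bfx, a)$ alluded to in the introduction.
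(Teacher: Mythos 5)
Your proof is correct and follows essentially the same route as the paper's: both rest on the block formula for $X(\bfx,a_{12})^{r}$, Lemma~\ref{lem:Pr-1-trace} to extract $a_{12}x_{r}$ from the trace, and the Cayley--Hamilton identity $P^{n-1}=a_{12}1_{n-1}$ combined with $\tr(A)=0$ to make the choice $x_{n-1}=a_{11}$ consistent with the last equation. Your only organizational difference is factoring $A=a_{11}1_{n}+a_{12}B$ up front (and treating $a_{12}=0$ separately), where the paper instead works with the congruence $Q\equiv a_{11}1_{n-1}\bmod{(a_{12})}$ and divides $\tr(P^{r}Q)$ by $a_{12}$; the mathematical content is identical.
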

\begin{proof}
By Lemma~\ref{lem:X_n(a,b)-powers} we have 
\[
X(\bfx,a_{12})^{r}=\begin{pmatrix}0 & \overline{0}\\
P^{r-1}\bfx & P^{r}
\end{pmatrix},
\]
where $P=(p_{ij})$, $1\leq i,j,\leq n-1$ is such that $p_{i,i+1}=1$
for $i=1,\dots,n-2$, $p_{n-1,1}=a_{12}$ and $p_{ij}=0$ otherwise
(i.e., $P$ is the row-wise companion matrix of $x^{n-1}-a_{12}$).
Writing $A$ in block-form, we have
\[
A=\begin{pmatrix}a_{11} & (a_{12},0,\dots,0)\\
\bfa & Q
\end{pmatrix},
\]
where $\bfa$ is an $n\times1$ matrix and $Q\in\gl_{n-1}(R)$. Thus
\[
X(\bfx,a_{12})^{r}A=\begin{pmatrix}0 & \overline{0}\\
a_{11}P^{r-1}\bfx+P^{r}\bfa & Q'
\end{pmatrix},
\]
where $Q'=P^{r-1}\bfx(a_{12},0,\dots,0)+P^{r}Q$. Thus, by Lemma~\ref{lem:Pr-1-trace},
\[
\tr(X(\bfx,a_{12})^{r}A)=\tr(Q')=a_{12}x_{r}+\tr(P^{r}Q),
\]
for each $r=1,\dots,n-1$. We have $\tr(P^{r})\equiv0\mod{(a_{12})}$,
for $r=1,\dots,n-1$, and since $A\equiv a_{11}1_{n}\mod{(a_{12})}$
it follows that $Q\equiv a_{11}1_{n-1}\mod{(a_{12})}$. Thus 
\[
\tr(P^{r}Q)\equiv a_{11}\tr(P^{r})\equiv0\mod{(a_{12})},
\]
so there exist $m_{r}\in R$ such that $\tr(P^{r}Q)=a_{12}m_{r}$,
for each $r=1,\dots,n-1$. Put $x_{r}=-m_{r}$, so that 
\[
\tr(X(\bfx,a_{12})^{r}A)=0,
\]
for $r=1,\dots,n-1$. 

Finally, we claim that $\tr(P^{n-1}Q)=-a_{11}a_{12},$ so that 
\[
x_{n-1}=a_{11}.
\]
Indeed, since $P$ is $\gl_{n-1}(R)$-regular with characteristic
polynomial $x^{n-1}-a_{12}$, we have $P^{n-1}=a_{12}1_{n-1}$, so
$\tr(P^{n-1}Q)=a_{12}\tr(Q)=a_{12}(-a_{11})$, as claimed. 
\end{proof}
The following result is essentially \cite[Theorem~6.3]{commutatorsPID-2016},
but the result here is stronger in that it says that $X$ can be taken
in $\sl_{n}(R)$ and such that it is $\gl_{n}(R/\mfp)$-regular mod
any maximal ideal $\mfp$ of $R$.
\begin{thm}
Let $A\in\sl_{n}(R)$ with $n\geq2$. Then there exist matrices $X\in\sl_{n}(R)$
and $Y\in\gl_{n}(R)$ such that $[X,Y]=A$, where $X$ can be chosen
such that $X_{\mfp}$ is $\gl_{n}(R/\mfp)$-regular for every maximal
ideal $\mfp$ of $R$. 
\end{thm}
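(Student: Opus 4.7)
For the scalar case $A=\lambda 1_n$, the relation $n\lambda=0$ in the domain $R$ forces either $\lambda=0$ or $n=0$ in $R$. In the first subcase, take $X$ to be the companion matrix of $x^{n}$ (a nilpotent shift in $\sl_n(R)$) and $Y=1_n$. In the second, apply Lemma~\ref{lem:Identity-commutator}. In either case the resulting $X$ is a companion matrix and hence $\gl_n(R/\mfp)$-regular modulo every maximal ideal $\mfp$. For non-scalar $A$ with $n=2$, I would argue directly: writing $A=\left(\begin{smallmatrix} a & b \\ c & -a \end{smallmatrix}\right)$, take $X=\left(\begin{smallmatrix} 0 & 1 \\ 1 & 0 \end{smallmatrix}\right)$ when $b=c=0$ and otherwise $X=\left(\begin{smallmatrix} 0 & -b' \\ c' & 0 \end{smallmatrix}\right)$, where $b=db'$, $c=dc'$, $d=\gcd(b,c)$; in each case an explicit $Y\in\gl_2(R)$ is obtained via a B\'ezout identity for $b', c'$, and coprimality of $b',c'$ makes $X_\mfp$ non-scalar (hence $\gl_2(R/\mfp)$-regular) at every $\mfp$.

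The substantive case is $A$ non-scalar with $n\geq 3$. Here I would first apply \cite[Theorem~5.6]{commutatorsPID-2016} to $\GL_n(R)$-conjugate $A$ into Laffey--Reams form, and then modify Lemma~\ref{lem:tr(XrA)-is-zero} by taking $a=a_{12}/d$ (with $d=\gcd(a_{11},a_{12})$) in place of $a=a_{12}$ inside $X(\bfx,a)$. A repetition of the computation from the proof of that lemma (with $P$ now the companion of $x^{n-1}-a$) shows that the trace equations $\tr(X(\bfx,a)^r A)=a_{12}x_r+\tr(P^r Q)=0$ remain solvable in $R^{n-1}$: for $r<n-1$ because $\tr(P^r)=0$ puts $\tr(P^r Q)$ in the ideal $(a_{12})$, and for $r=n-1$ because $P^{n-1}=a\cdot 1_{n-1}$ and $d\mid a_{11}$ yield $x_{n-1}=a\cdot a_{11}/a_{12}=a_{11}/d\in R$. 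The crucial consequence is $\gcd(x_{n-1},a)=\gcd(a_{11}/d,a_{12}/d)=1$, so Lemma~\ref{lem:X_n(a,b)-regular} renders $X(\bfx,a)_\mfp$ $\gl_n(R/\mfp)$-regular at every maximal ideal $\mfp$.

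The matrix $Y$ is then produced by a two-step local-global argument: the $\gl_n$-analogue of the Laffey--Reams criterion over a local PID (\cite[Proposition~3.3]{commutatorsPID-2016}) provides, at each $\mfp$, a local solution $Y_\mfp\in\gl_n(R_\mfp)$ to $[X,Y_\mfp]=A$ out of the $\gl_n(R/\mfp)$-regularity of $X_\mfp$ together with the vanishing of $\tr(X^r A)$, and the local-global principle for cokernels of $R$-linear maps of finitely generated modules, applied to $Y\mapsto [X,Y]$ on $\gl_n(R)$, then assembles these into a global $Y\in\gl_n(R)$. The main obstacle is the choice of $a$: with $a=a_{12}$ as in Lemma~\ref{lem:tr(XrA)-is-zero}, $X(\bfx,a_{12})$ fails to be $\gl_n$-regular modulo any prime containing $\gcd(a_{11},a_{12})$; the switch to $a=a_{12}/d$ precisely removes this degeneracy while still producing integral solutions to the trace equations, and this is the new ingredient relative to \cite{commutatorsPID-2016}.
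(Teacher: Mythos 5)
Your proposal is correct and follows essentially the same route as the paper: reduce to Laffey--Reams form, use the matrices $X(\bfx,a)$ and the trace computation of Lemma~\ref{lem:tr(XrA)-is-zero} to kill $\tr(X^{r}A)$ for $r=1,\dots,n-1$, get $\gl_{n}(R/\mfp)$-regularity at every $\mfp$ from coprimality via Lemma~\ref{lem:X_n(a,b)-regular}, and conclude with the Laffey--Reams criterion (equivalently, your local criterion plus the local-global principle). The only substantive variation is how the coprimality is arranged: the paper writes $A=dA'$ with $d=\gcd(a_{11},a_{12})$ and absorbs $d$ into $Y$ (legitimate since $Y$ need only lie in $\gl_{n}(R)$), whereas you keep $A$ and replace the parameter $a_{12}$ in $X$ by $a_{12}/d$; your check that the trace equations remain integrally solvable with this modified $a$ (using that $\tr(P^{r})=0$ for $r\leq n-2$ and $P^{n-1}=(a_{12}/d)1_{n-1}$) is correct, so both devices work.
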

\begin{proof}
For $n=2$ this is proved separately (see the proof of \cite[Theorem~6.3]{commutatorsPID-2016}).
Assume from now on that $n\geq3$. First, if $A$ is scalar, then
$A\in\sl_{n}(R)$ implies that either $A=0$ or $n=0$ in $R$. The
former case is trivial, while the latter follows from Lemma~\ref{lem:Identity-commutator}.

Assume now that $A$ is not scalar and let $A=(a_{ij})$. After a
possible $\GL_{n}(R)$-conjugation, we can assume that $A$ is in
Laffey\textendash Reams form; see \cite[Theorem~5.6]{commutatorsPID-2016}.
Moreover, we may assume that $(a_{11},a_{12})=(1)$, because if $d$
is a common divisor of $a_{11}$ and $a_{12}$, we can write $A=dA'$
for $A'$ in Laffey\textendash Reams form and if $A'=[X,Y]$ with
$X,Y$ as in the theorem, then $A=[X,dY]$.

By Lemma~\ref{lem:tr(XrA)-is-zero}, there exists an ${\bf x}=(x_{1},\dots,x_{n-1})^{\mathsf{{T}}}\in R^{n-1}$,
with $x_{n-1}=a_{11}$, such that 
\[
\tr(X(\bfx,a_{12})^{r}A)=0,
\]
for each $r=1,\dots,n-1$. Since $x_{n-1}=a_{11}$ and $(a_{11},a_{12})=(1)$,
we have, for every maximal ideal $\mfp$ of $R$, that either $x_{n-1}\notin\mfp$
or $a_{12}\notin\mfp$, and therefore $X_{\mfp}$ is $\gl_{n}(R/\mfp)$-regular,
by Lemma~\ref{lem:X_n(a,b)-regular}. Thus, by \cite[Proposition~3.3]{commutatorsPID-2016},
there exists a $Y\in\gl_{n}(R)$ such that
\[
[X(\bfx,a_{12}),Y]=A.
\]
\end{proof}
We now come to the proof of our main theorem. Just like the proof
of the above theorem, our proof uses Lemma~\ref{lem:tr(XrA)-is-zero},
but since here $X({\bf x},a_{12})_{\mfp}$ cannot in general be $\sl_{n}(R/\mfp)$-regular
for all maximal ideals (cf.~Remark~\ref{rem:not-sl_n-reg}), we
need to treat the exceptional primes separately, and this requires
us to pass to the localisations $R_{\mfp}$, for various prime ideals
$\mfp\in\Spec(R)$. For an element $X\in\gl_{n}(R)$ we will write
$X(\mfp)$ for its canonical image in $\gl_{n}(R_{\mfp})$, not to
be confused with $X_{\mfp}\in\gl_{n}(R/\mfp)$. For any element $x\in R$,
we will use the same symbol $x$ to denote the image of $x$ under
the canonical injection $R\hookrightarrow R_{\mfp}$, and the context
will make it clear in which ring we are working. Similarly, we will
denote the maximal ideal of $R_{\mfp}$ by $\mfp$ and will identify
$X_{\mfp}\in\gl_{n}(R/\mfp)$ with the image of $X(\mfp)$ in $\gl_{n}(R_{\mfp}/\mfp)$.

We will prove that for fixed $A,X\in\sl_{n}(R)$, and for any maximal
ideal $\mfp$ of $R$, there exists a solution $Y(\mfp)\in\sl_{n}(R_{\mfp})$
to the localised equation $[X(\mfp),Y(\mfp)]=A(\mfp)$. Since the
equations $[X,Y]=A$, $\tr(Y)=0$ in $Y$ are equivalent to a system
of linear equations in the entries of $Y$, the well known (and easy
to prove) local-global principle for systems of linear equations (see,
e.g., \cite[Proposition~1]{Hermida}) implies the existence of a global
solution.
\begin{thm}
\label{thm:MainPIDs}Let $A\in\sl_{n}(R)$ for $n\geq3$. Then there
exist matrices $X,Y\in\sl_{n}(R)$ such that $[X,Y]=A$, where $X$
can be chosen such that $X_{\mfp}$ is $\gl_{n}(R/\mfp)$-regular
for every maximal ideal $\mfp$ of $R$. Moreover, $X$ can be chosen
such that $X_{\mfp}$ is $\sl_{n}(R/\mfp)$-regular for every $\mfp$
such that $A_{\mfp}$ is not scalar. 
\end{thm}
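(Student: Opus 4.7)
The plan is to follow the strategy outlined in the introduction. If $A$ is scalar, $\tr(A) = nc = 0$ (where $A = c 1_n$) forces either $A = 0$ or $n \cdot 1_R = 0$; in the former case take $Y = 0$ and $X$ any companion matrix in $\sl_n(R)$ (which is $\gl_n(R/\mfp)$-regular for every $\mfp$, while the moreover clause is vacuous), and in the latter invoke Lemma~\ref{lem:Identity-commutator}, whose $X$ is a shift matrix and hence remains $\gl_n(R/\mfp)$-regular after reduction. Assume henceforth that $A$ is non-scalar. After $\GL_n(R)$-conjugation and dividing by $\gcd(a_{11}, a_{12})$, I may assume $A$ is in Laffey--Reams form with $(a_{11}, a_{12}) = (1)$. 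Lemma~\ref{lem:tr(XrA)-is-zero} then provides $\bfx \in R^{n-1}$ with $x_{n-1} = a_{11}$ and $\tr(X(\bfx, a_{12})^r A) = 0$ for $r = 1, \dots, n-1$; set $X := X(\bfx, a_{12})$.

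The coprimality $(a_{11}, a_{12}) = (1)$ ensures that for every maximal ideal $\mfp$ of $R$, at least one of $a_{11}, a_{12}$ is a unit modulo $\mfp$; hence Lemma~\ref{lem:X_n(a,b)-regular} gives that $X_\mfp$ is $\gl_n(R/\mfp)$-regular, and even $\sl_n(R/\mfp)$-regular when $a_{12} \notin \mfp$. The Laffey--Reams form yields $A \equiv a_{11} 1_n$ modulo $(a_{12})$, so if $A_\mfp$ is non-scalar then $a_{12} \notin \mfp$; both regularity claims of the theorem therefore hold. To finish, I invoke the local-global principle for systems of linear equations to reduce the problem to finding $Y(\mfp) \in \sl_n(R_\mfp)$ with $[X(\mfp), Y(\mfp)] = A(\mfp)$ for each maximal $\mfp$. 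When $a_{12} \notin \mfp$, Proposition~\ref{prop:Criterion} applied over $R_\mfp$ produces such $Y(\mfp)$ directly from the trace conditions on $X$.

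The main obstacle is the case $a_{12} \in \mfp$, where $X_\mfp$ is only $\gl_n(R/\mfp)$-regular, so Proposition~\ref{prop:Criterion} does not apply. Here I would begin with a solution $Y' \in \gl_n(R_\mfp)$ produced by the preceding ($\gl_n$-version) theorem, and then subtract $Z \in R_\mfp[X]$ with $\tr(Z) = \tr(Y')$, obtaining $Y := Y' - Z \in \sl_n(R_\mfp)$ with $[X, Y] = A$ (as $Z$ commutes with $X$). By Lemma~\ref{lem:X_n(a,b)-powers}, $\tr(R_\mfp[X]) = n R_\mfp + (n-1) a_{12} R_\mfp$; taking the trace of $A = a_{11} 1_n + a_{12} B$ and using $(a_{11}, a_{12}) = (1)$ shows $a_{12} \mid n$ in $R_\mfp$. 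Since $n - 1$ is then a unit in the local ring $R_\mfp$, this simplifies $\tr(R_\mfp[X])$ to $(a_{12}) R_\mfp$, and the question reduces to proving $a_{12} \mid \tr(Y')$ in $R_\mfp$.

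For this final step I would work modulo $(a_{12}) R_\mfp$. The columns $e_1, X e_1, \dots, X^{n-1} e_1$ form a basis of $R_\mfp^n$, because their determinant reduces modulo $a_{12}$ to $\pm a_{11}^{n-1}$, a unit in $R_\mfp$. After reduction modulo $a_{12}$, this cyclic basis conjugates $\bar X = X(\bfx, 0)$ to the transposed Jordan block $J_n^{\mathsf{T}}$, and the commutator equation $[J_n^{\mathsf{T}}, W] = a_{11} 1_n$ over $R_\mfp/(a_{12})$ forces, along its subdiagonal, all diagonal entries of $W$ to coincide. Hence $\tr(W) = n W_{11} \equiv 0$ modulo $a_{12}$ using $a_{12} \mid n$, and by conjugation-invariance of trace, $\tr(Y') \equiv 0$ modulo $a_{12}$, completing the argument.
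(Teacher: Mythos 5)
Your proposal is correct, and while it follows the paper's global skeleton exactly (Laffey--Reams form with $(a_{11},a_{12})=(1)$, the matrices $X(\bfx,a_{12})$ with the trace conditions from Lemma~\ref{lem:tr(XrA)-is-zero}, the local-global principle, and Proposition~\ref{prop:Criterion} at the primes where $A_{\mfp}$ is non-scalar), your treatment of the exceptional primes with $a_{12}\in\mfp$ is genuinely different from the paper's. The paper runs a descent: it starts from an $\sl_{n}(F)$-solution over the fraction field, clears denominators to get $[X(\mfp),Q]=p^{m}A(\mfp)$ with $Q\in\sl_{n}(R_{\mfp})$, takes $m$ minimal, and shows $m=0$ by writing $Q\equiv f(X)\bmod p$ and correcting $Q$ by a carefully chosen multiple of $X^{n-1}$ so that a factor of $p$ can be cancelled while preserving trace zero. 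You instead start from a $\gl_{n}(R_{\mfp})$-solution $Y'$ (legitimately available, since $X_{\mfp}$ is $\gl_{n}(R/\mfp)$-regular and the trace conditions hold, via the $\gl_{n}$-criterion over the local PID $R_{\mfp}$) and reduce the whole problem to the single membership $\tr(Y')\in\tr(R_{\mfp}[X])=(a_{12})$, which you verify by conjugating $X$ modulo $(a_{12})$ to a nilpotent Jordan block in the cyclic basis $e_{1},Xe_{1},\dots,X^{n-1}e_{1}$ (invertibility of the basis change using that the anti-diagonal of the reduced Gram-type matrix is $x_{n-1}=a_{11}$, a unit at these primes) and reading off from the subdiagonal of $[N,W]=a_{11}1_{n}$ that all diagonal entries of $W$ coincide, so $\tr(W)=nW_{11}\equiv0$ since $n\in(a_{12})$. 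I checked the details and they hold: the subdiagonal entries of $[N,W]$ are indeed $W_{ii}-W_{i+1,i+1}$, and both your computation of $\tr(R_{\mfp}[X])$ and the passage from a unit determinant modulo $(a_{12})\subseteq\mfp$ to invertibility over $R_{\mfp}$ are sound. Your route buys a cleaner conceptual statement (the only obstruction to fixing the trace of a $\gl_{n}$-solution is whether $\tr(Y')$ lies in $\tr(R_{\mfp}[X])$) and makes visible why the normalisation $x_{n-1}=a_{11}$ matters beyond regularity; the paper's route avoids the cyclic-basis computation and stays entirely within trace identities. The only presentational caveat is that you should invoke the local Laffey--Reams criterion over $R_{\mfp}$ for your specific $X$ rather than the statement of the preceding theorem (whose $X$ is not a priori the same), but this is exactly what its proof provides.
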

\begin{proof}
Assume first that $A$ is scalar. Then $A\in\sl_{n}(R)$ implies that
either $A=0$ or $n=0$ in $R$. The former case is trivial, while
the latter follows from Lemma~\ref{lem:Identity-commutator}.

Assume from now on that $A$ is not scalar and let $A=(a_{ij})$.
After a possible $\GL_{n}(R)$-conjugation, we can assume that $A$
is in Laffey\textendash Reams form. Moreover, we may assume that $(a_{11},a_{12})=(1)$,
because if $d$ is a common divisor of $a_{11}$ and $a_{12}$, we
can write $A=dA'$ for $A'$ in Laffey\textendash Reams form, and
if $A'$ is a commutator of two matrices in $\sl_{n}(R)$, then so
is $A$. 

By Lemma~\ref{lem:tr(XrA)-is-zero}, there exists an ${\bf x}=(x_{1},\dots,x_{n-1})^{\mathsf{{T}}}\in R^{n-1}$,
with $x_{n-1}=a_{11}$, such that 
\[
\tr(X(\bfx,a_{12})^{r}A)=0,
\]
for each $r=1,\dots,n-1$. From now on, let $X:=X(\bfx,a_{12})$.
Since $(a_{11},a_{12})=(1)$, we have, for every maximal ideal $\mfp$
of $R$, that either $x_{n-1}\notin\mfp$ or $a_{12}\notin\mfp$,
and therefore that $X_{\mfp}$ is $\gl_{n}(R/\mfp)$-regular; see
Lemma~\ref{lem:X_n(a,b)-regular}. Moreover, since $A$ is in Laffey-Reams
form, we have $A\equiv a_{11}1_{n}\bmod(a_{12})$, and this, combined
with the fact that $\tr(A)=0$ and $(a_{11},a_{12})=(1)$, implies
that
\begin{equation}
n\in(a_{12}).\label{eq:n-in(a12)}
\end{equation}

We will now pass to the localisations $R_{\mfp}$ for various maximal
ideals $\mfp$ of $R$. Let $\mfp$ be any maximal ideal of $R$.
Then we have the local relation
\[
\tr(X(\mfp)^{r}A(\mfp))=0,\qquad r=1,\dots,n-1.
\]
in $R_{\mfp}$. First, suppose that $A_{\mfp}$ is not scalar. Then
$a_{12}\notin\mfp$, so the matrix $X(\mfp)_{\mfp}=X_{\mfp}$ is $\sl_{n}(R_{\mfp}/\mfp)$-regular,
by Lemma~\ref{lem:X_n(a,b)-regular}, and so, by Proposition~\ref{prop:Criterion},
there exists a $Y(\mfp)\in\sl_{n}(R_{\mfp})$ such that 
\[
[X(\mfp),Y(\mfp)]=A(\mfp).
\]

Next, suppose that $A_{\mfp}$ is scalar, so that $a_{12}\in\mfp$.
Since $a_{12}\neq0$, $X$ is $\sl_{n}(F)$-regular as an element
of $\sl_{n}(F)$, by Lemma~\ref{lem:X_n(a,b)-regular}, so there
exists a $Y(0)\in\sl_{n}(F)$ such that $[X,Y(0)]=A$. Clearing denominators
in $Y(0)$ and passing to the localisation at $\mfp$, we conclude
that there exists a power $p^{m}$ of a generator $p\in R_{\mfp}$
of $\mfp$ and a $Q\in\sl_{n}(R_{\mfp})$, such that 
\begin{equation}
[X(\mfp),Q]=p^{m}A(\mfp).\label{eq:=00005BX,Q=00005D=00003DpmA}
\end{equation}
Let $m\geq0$ be the minimal integer such that (\ref{eq:=00005BX,Q=00005D=00003DpmA})
holds for some $Q\in\sl_{n}(R_{\mfp})$. We will show that $m=0$.
For a contradiction, assume that $m\geq1$. Reducing (\ref{eq:=00005BX,Q=00005D=00003DpmA})
mod $\mfp$, we obtain $[X_{\mfp},Q_{\mfp}]=0$, so $Q_{\mfp}$ commutes
with $X_{\mfp}$. Since $X_{\mfp}$ is $\gl_{n}(R/\mfp)$-regular,
\[
Q=f(X(\mfp))+pD,
\]
for some polynomial $f(T)\in R_{\mfp}[T]$ of degree $n-1$ and some
$D\in\gl_{n}(R_{\mfp})$. Write, $f(T)=c_{0}+c_{1}T+\dots+c_{n-1}T^{n-1}$,
for $c_{i}\in R_{\mfp}$. By Lemma~(\ref{lem:X_n(a,b)-powers}),
we have 
\[
\tr(X^{i})=\begin{cases}
n & \text{if }i=0,\\
(n-1)a_{12} & \text{if }i=n-1,\\
0 & \text{otherwise}
\end{cases}
\]
which implies
\begin{equation}
\tr(X(\mfp)^{i})=\begin{cases}
n & \text{if }i=0,\\
(n-1)a_{12} & \text{if }i=n-1,\\
0 & \text{otherwise}.
\end{cases}\label{eq:trace-X-powers}
\end{equation}
Hence
\begin{equation}
0=\tr(Q)=\sum_{i=0}^{n-1}c_{i}\tr(X(\mfp)^{i})+p\tr(D)=c_{0}n+c_{n-1}(n-1)a_{12}+p\tr(D).\label{eq:tr(Q)-with-tr(D)}
\end{equation}
Moreover, we have $[X(\mfp),Q]=[X(\mfp),pD]=p^{m}A(\mfp)$, so
\[
0=\tr(pDp^{m}A(\mfp))=p^{m+1}\tr(DA(\mfp)),
\]
and thus $\tr(DA(\mfp))=0$. Since $A(\mfp)\equiv a_{11}1_{n}\bmod(a_{12})$
and $(a_{11},a_{12})=(1)$, we conclude that 
\begin{equation}
\tr(D)\in(a_{12}).\label{eq:tr(D)-in(a12)}
\end{equation}
Since $n\in(a_{12})$ by (\ref{eq:n-in(a12)}), we have $n=a_{12}n'$
for some $n'\in R_{\mfp}$. Moreover, since $R_{\mfp}$ is a local
ring, $n-1$ is a unit in $R_{\mfp}$, so we can define the matrix
\[
Q'=(c_{0}n'(n-1)^{-1}+c_{n-1})X^{n-1}+pD.
\]
By (\ref{eq:trace-X-powers}) and (\ref{eq:tr(Q)-with-tr(D)}) we
have
\[
\tr(Q')=c_{0}n+c_{n-1}(n-1)a_{12}+p\tr(D)=\tr(Q)=0,
\]
which, by (\ref{eq:tr(D)-in(a12)}), implies that $c_{0}n+c_{n-1}(n-1)a_{12}\in(pa_{12})$,
and thus 
\[
c_{0}n'(n-1)^{-1}+c_{n-1}\in(p).
\]
Writing $c_{0}n'(n-1)^{-1}+c_{n-1}=p\alpha$ for some $\alpha\in R_{\mfp}$,
we then get
\[
[X(\mfp),Q]=[X(\mfp),pD]=[X(\mfp),Q']=p[X(\mfp),\alpha X^{n-1}+D]=p^{m}A(\mfp),
\]
where $\tr(\alpha X^{n-1}+D)=0$ because 
\[
p\tr(\alpha X^{n-1}+D)=\tr((c_{0}n'(n-1)^{-1}+c_{n-1})X^{n-1}+pD)=\tr(Q')=0.
\]
By cancelling a factor of $p$, we obtain a contradiction to the minimality
of $m$ in (\ref{eq:=00005BX,Q=00005D=00003DpmA}). Thus $m=0$, so
there exists a $Y(\mfp)\in\sl_{n}(R_{\mfp})$ such that $[X(\mfp),Y(\mfp)]=A(\mfp)$.

We have thus proved that for any maximal ideal $\mfp$ of $R$, there
exists a $Y(\mfp)\in\sl_{n}(R_{\mfp})$ such that 
\[
[X(\mfp),Y(\mfp)]=A(\mfp).
\]
Thus, by the local-global principle for systems of linear equations
(see, e.g., \cite[Proposition~1]{Hermida}), there exists a $Y\in\sl_{n}(R)$
such that
\[
[X,Y]=A.
\]
\end{proof}
In the same way as in \cite[Corollary~6.4]{commutatorsPID-2016},
Theorem~\ref{thm:MainPIDs} implies the analogous statement over
any principal ideal ring (PIR), thanks to a theorem of Hungerford
that any PIR is a finite product of homomorphic images of PIDs.

\section{\label{sec:Shalevs-conj}Shalev's conjecture for $n=2$}

This section is devoted to a proof of Shalev's conjecture mentioned
in the introduction, in the case $n=2$. For a group $G$ and elements
$x,y\in G$ we write the commutator as $(x,y)=xyx^{-1}y^{-1}$. In
this section, $R$ will denote a local PID (i.e., a discrete valuation
ring) with residue field $k$. We denote the maximal ideal in $R$
by $\mfp$ and let $\pi\in\mfp$ be a generator. We first prove a
result whose conclusion is weaker than Shalev's conjecture for $n=2$,
in that one of the elements is only shown to lie in $\GL_{2}(R)$,
but where the hypotheses are slightly more general in that we allow
any residue field apart from $\F_{2}$. We will then refine this result
to prove Proposition~\ref{prop:Shalevs-conj}, which contains Shalev's
conjecture for $n=2$ as a special case.

Recall that an element $X\in\gl_{n}(R)$ is $\gl_{n}(R)$-regular
if and only if $X_{\mfp}$ is $\gl_{n}(k)$-regular; see \cite[Lemma~2.5]{commutatorsPID-2016}.
Moreover, if $X$ is $\gl_{n}(R)$-regular then $X$ is $\GL_{n}(R)$-conjugate
to a companion matrix; see \cite[Lemma~2.3]{commutatorsPID-2016}.
By convention, we will write companion matrices row-wise, that is,
with ones on the super-diagonal. If $X\in\gl_{n}(R)$ has units on
the superdiagonal and zeros above the superdiagonal, then it is regular;
see \cite[Lemma~2.7]{commutatorsPID-2016} (note that to go from units
to ones on the superdiagonal, one only needs to conjugate by a diagonal
element). The same conclusion holds if `superdiagonal' is replaced
by `subdiagonal'.
\begin{prop}
\label{prop:ShalevGL2}Let $R$ be a local PID such that $|k|>2$.
Then, for every $A\in\SL_{2}(R)$ there exist $x\in\GL_{2}(R)$ and
$y\in\SL_{2}(R)$ such that $(x,y)=A$. Moreover, $y$ can be taken
to be $\GL_{2}(R)$-conjugate to an element of the form $\left(\begin{smallmatrix}0 & 1\\
-1 & s
\end{smallmatrix}\right)\in\SL_{2}(R)$.
\end{prop}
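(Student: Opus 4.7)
The plan is to seek $y$ in the $\GL_{2}(R)$-conjugacy class of $y_{0}=\left(\begin{smallmatrix}0 & 1\\-1 & s\end{smallmatrix}\right)$ for a suitable $s\in R$, and to construct $x$ as a conjugator realising $xyx^{-1}=Ay$. Since $y_{0}$ has a unit on its sub-diagonal, it is $\gl_{2}(R)$-regular, and by the lemmas recalled just before the statement, two $\gl_{2}(R)$-regular matrices are $\GL_{2}(R)$-conjugate if and only if they share the same characteristic polynomial. Because $\det A=1$ makes $\det(Ay)=\det(y)$ automatic, the task reduces to choosing $g\in\GL_{2}(R)$ and $s\in R$ (with $y=gy_{0}g^{-1}$) so that $\tr(Ay)=\tr(y)$ and $\overline{Ay}$ is non-scalar in $\gl_{2}(k)$ (the latter forcing $Ay$ itself to be $\gl_{2}(R)$-regular).

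Writing $A'=g^{-1}Ag=\left(\begin{smallmatrix}a' & b'\\c' & d'\end{smallmatrix}\right)$, a direct computation gives $\tr(Ay)=\tr(A'y_{0})=-b'+c'+sd'$, so the trace condition is the scalar equation
\[
s(d'-1)=b'-c',
\]
and a similar check shows $\overline{A'y_{0}}$ is scalar only when $\bar d'=0$ (with further linked constraints on $\bar b',\bar c'$). Hence it suffices to arrange that the $(2,2)$-entry $\bar d'$ of $\overline{g^{-1}Ag}$ lies in $k\setminus\{0,1\}$, which is possible thanks to the hypothesis $|k|>2$: then $d'-1\in R^{\times}$ forces the unique solution $s=(b'-c')/(d'-1)\in R$, and $\bar d'\neq 0$ rules out the scalar case for $\overline{Ay}$. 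The key linear-algebra input needed is that for any non-scalar $M\in\gl_{2}(k)$ and any $\gamma\in k$ there exists $h\in\GL_{2}(k)$ with $(hMh^{-1})_{22}=\gamma$: choose the second column of $h$ to be either a $\gamma$-eigenvector of $M$, or, in the absence of such, any non-eigenvector $v$ of $M$, and the first column to be $(M-\gamma I)v$. Applied to $\bar A$ when $\bar A$ is non-scalar, and lifting the resulting $h$ to $g\in\GL_{2}(R)$, this settles the generic case.

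The remaining cases divide cleanly. When $\bar A=-I$ with $\chara k\neq 2$, one has $d'-1\equiv -2\in R^{\times}$ for every $g$, so $s$ exists trivially, and $\overline{Ay}=-\bar y$ is automatically non-scalar. The scalar cases $A=\pm I$ admit explicit small commutators: $(I,y_{0})=I$ settles $A=I$, and for $A=-I$ the pair $x=\left(\begin{smallmatrix}0 & 1\\1 & 0\end{smallmatrix}\right)$, $y=y_{0}$ with $s=0$ satisfies $xyx^{-1}=y^{-1}$, giving $(x,y)=y^{-2}=-I$. The main obstacle is the residual case $\bar A=I$ with $A\neq I$: here $d'\equiv 1\pmod{\mfp}$ for every conjugate, so the trace equation can only be solved in $R$ after matching the $\pi$-valuations of $b'-c'$ and $d'-1$. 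To deal with this, write $A-I=\pi^{n}B$ with $\bar B\neq 0$; the trace equation then reduces to $s(g^{-1}Bg)_{22}=(g^{-1}Bg)_{12}-(g^{-1}Bg)_{21}$, and the $(2,2)$-entry-adjustment lemma applied to $\bar B$ (or, in the scalar sub-case $\bar B=\lambda I\neq 0$, the triviality that every conjugate has $(2,2)$-entry $\lambda$) produces a $g$ for which $(g^{-1}Bg)_{22}$ is a unit of $R$, so $s\in R$ and $\overline{Ay}=\bar y$ is automatically non-scalar.
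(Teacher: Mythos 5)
Your proof is correct, and its skeleton is the same as the paper's: produce a regular $y\in\SL_{2}(R)$ such that $Ay$ (the paper works with $yA$, which only changes $A$ by conjugation) is also regular and has the same trace and determinant as $y$, then use the fact that regular matrices over a local PID with equal characteristic polynomials are $\GL_{2}(R)$-conjugate. Where you genuinely differ is in the parametrisation and in how the trace equation is solved. The paper takes $y=\left(\begin{smallmatrix}y_{11}&1\\ y_{11}y_{22}-1&y_{22}\end{smallmatrix}\right)$, normalises $A$ by conjugation (to companion form, or to $\lambda 1+\pi^{i}A'$ with $A'$ companion-like), and solves the resulting bilinear equation in $(y_{11},y_{22})$ explicitly, exhibiting $y_{11}=-a'-b'+1$, $y_{22}=1$ in the delicate case $\lambda=1$. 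You instead fix $y=gy_{0}g^{-1}$ with $y_{0}=\left(\begin{smallmatrix}0&1\\-1&s\end{smallmatrix}\right)$ and reduce everything to the single linear equation $s(d'-1)=b'-c'$ in $s$, controlling its solvability through the $(2,2)$-entry of $g^{-1}Ag$ via your entry-adjustment lemma; your factorisation $A-1=\pi^{n}B$ with $\bar B\neq0$ plays the role of the paper's $\lambda 1+\pi^{i}A'$ normal form. Your route is somewhat more uniform (one scalar equation throughout, and regularity of $Ay$ read off from the single entry $-d'$), at the cost of the extra conjugation lemma; the paper's route keeps $y$ explicit, which it exploits in the proof of Proposition~\ref{prop:Shalevs-conj}, where additional control of $s$ modulo $\mfp$ is required. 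Two cosmetic points only: with your column construction the conjugation in the entry-adjustment lemma should read $(h^{-1}Mh)_{22}=\gamma$ (or replace $h$ by $h^{-1}$), and in the case where a $\gamma$-eigenvector exists the first column of $h$ must be any vector completing a basis, not $(M-\gamma 1)v=0$.
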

\begin{proof}
The strategy of the proof is the following. We find a $y\in\SL_{2}(R)$
such that both $y$ and $yA$ are regular as elements in $\gl_{n}(R)$,
and such that $y$ and $yA$ have the same determinant and trace.
This implies that $y$ and $yA$ are $\GL_{2}(R)$-conjugate, that
is, there exists an $x\in\GL_{2}(R)$ such that 
\[
yA=xyx^{-1},
\]
and so $(x,y)=A$. 

By \cite[Lemma~2.7]{commutatorsPID-2016}, any 
\[
y=\begin{pmatrix}y_{11} & 1\\
y_{21} & y_{22}
\end{pmatrix}\in\gl_{n}(R)
\]
is $\gl_{n}(R)$-regular. In order for $\det(y)=\det(yA)$ we need
$\det(y)=1$, so $y_{21}=y_{11}y_{22}-1$, which we assume henceforth.
We distinguish two cases: either the image $A_{\mfp}$ of $A$ in
$\SL_{2}(k)$ is scalar or regular.

\smallskip\noindent\textbf{Case 1:} Assume that $A_{\mfp}$ is scalar.
If moreover $A$ is scalar, then $A=\pm1$, so $yA=\pm y$ is regular,
and $\tr(yA)=\pm\tr(y)$. Setting $y_{22}=-y_{11}$ so that $\tr(y)=0$,
we obtain the existence of an $x\in\GL_{2}(R)$ such that $yA=xyx^{-1}$.

If $A_{\mfp}$ is scalar but $A$ is not scalar, then up to conjugation
of $A$, we can write 
\[
A=\lambda1+\pi^{i}A',
\]
where $\lambda=\pm1$, $i\geq1$ and $A'=\begin{pmatrix}0 & 1\\
a' & b'
\end{pmatrix}\in\SL_{2}(R)$. Then, since $y_{\mfp}\in\SL_{2}(k)$ is regular, and 
\[
(yA)_{\mfp}=\pm y_{\mfp},
\]
we also have that $(yA)_{\mfp}$ is regular, and hence (because $R$
is local), that $yA$ is regular. Furthermore, we need to ensure that
\begin{equation}
\tr(yA)=y_{11}(\pi^{i}y_{22}+\lambda)+y_{22}(b'\pi^{i}+\lambda)+\pi^{i}(a'-1)=\tr(y)=y_{11}+y_{22}\label{eq:trace-scalarmodp}
\end{equation}
If $\chara k\neq2$ and $\lambda=-1$, then $\lambda-1$ is a unit,
so this equation clearly has a solution $y_{11}\in R$ for any $y_{22}\in R$.
If $\lambda=1$ (\ref{eq:trace-scalarmodp}) is equivalent to 
\[
y_{11}y_{22}+y_{22}b'+a'-1=0,
\]
which has a solution over $R$, for example $y_{11}=-a'-b'+1,y_{22}=1$.

\smallskip\noindent\textbf{Case 2:} Assume that $A_{\mfp}$ is non-scalar.
Then $A_{\mfp}$ is regular, so $A$ is regular, and hence, up to
conjugation, we can write
\[
A=\begin{pmatrix}0 & 1\\
-1 & b
\end{pmatrix},
\]
for $b\in R$. For $y=\begin{pmatrix}y_{11} & 1\\
y_{11}y_{22}-1 & y_{22}
\end{pmatrix}$, we have 
\[
yA=\begin{pmatrix}-1 & y_{11}+b\\
-y_{22} & y_{22}(y_{11}+b)-1
\end{pmatrix}.
\]
Since the residue field of $R$ has at least three elements, we can
choose $y_{22}$ such that both $y_{22}$ and $y_{22}-1$ are units.
Then $yA$ is regular by \cite[Lemma~2.7]{commutatorsPID-2016}, and
the equation 
\[
\tr(yA)=y_{22}(y_{11}+b)-2=\tr(y)=y_{11}+y_{22},
\]
has a solution $y_{11}\in R$. Hence, there exists an $x\in\GL_{2}(R)$
such that $ya=xyx^{-1}$.

Finally, $y=\begin{pmatrix}y_{11} & 1\\
y_{11}y_{22}-1 & y_{22}
\end{pmatrix}$ is regular, so it is $\GL_{2}(R)$-conjugate to $y=\begin{pmatrix}0 & 1\\
-1 & y_{11}+y_{22}
\end{pmatrix}$.
\end{proof}
Note that the hypothesis on the residue field in the above proposition
is optimal, since $\GL_{2}(\F_{2})=\SL_{2}(\F_{2})$, and this group
is not perfect.

The following result implies Shalev's conjecture mentioned in the
introduction, in the case $n=2$.
\begin{prop}
\label{prop:Shalevs-conj}Let $R$ be a Henselian discrete valuation
ring with finite residue field $k$ such that $\chara k>2$ and $|k|>3$.
Then, for every $A\in\SL_{2}(R)$ there exist $x\in\SL_{2}(R)$ and
$y\in\SL_{2}(R)$ such that $(x,y)=A$. 
\end{prop}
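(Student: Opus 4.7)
The plan is to bootstrap from Proposition~\ref{prop:ShalevGL2} by modifying $x$ with an element of the centralizer of $y$. If $z\in R[y]\cap\GL_{2}(R)$, then $z$ commutes with $y$, so
\[
(xz,y) \;=\; (xz)\,y\,(xz)^{-1}y^{-1} \;=\; xz\,y\,z^{-1}x^{-1}y^{-1} \;=\; x\,y\,x^{-1}y^{-1} \;=\; A.
\]
Hence it suffices to find such a $z$ with $\det(xz)=1$, i.e., $\det(z)=\det(x)^{-1}$; equivalently, $\det(x)^{-1}$ must lie in the image of the norm map $N\colon (R[y])^{\times}\to R^{\times}$.

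Since $y$ is $\GL_{2}(R)$-conjugate to $\bigl(\begin{smallmatrix}0 & 1\\ -1 & s\end{smallmatrix}\bigr)$ with $s=\tr(y)\in R$, the $R$-algebra $R[y]$ is isomorphic to $R[t]/(t^{2}-st+1)$, and in the basis $\{1,y\}$ the norm form equals $\alpha^{2}+s\alpha\beta+\beta^{2}$, of discriminant $s^{2}-4$. If $s^{2}-4\in R^{\times}$ (equivalently $s\not\equiv\pm 2\pmod{\mfp}$), then $R[y]/R$ is étale; since $R$ is Henselian with finite residue field $k$ of odd characteristic, $R[y]$ is either isomorphic to $R\times R$ (if $s^{2}-4$ is a square in $k$) or to the unique unramified quadratic extension of $R$. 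In both cases the residue norm is surjective onto $k^{\times}$ (trivially in the split case; because norms from finite extensions of finite fields are surjective in the other case). A standard Hensel-type argument, using that the trace $R[y]\to R$ is surjective on principal units $1+\mfp^{n}R[y]\twoheadrightarrow 1+\mfp^{n}$ for all $n\geq 1$, then lifts this to full surjectivity $N\colon (R[y])^{\times}\twoheadrightarrow R^{\times}$.

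The remaining task is to arrange $s\not\equiv\pm 2\pmod{\mfp}$. I would revisit the three subcases in the proof of Proposition~\ref{prop:ShalevGL2}. When $A=\pm 1$, that construction already yields $s=\tr(y)=0$, which is not $\pm 2\pmod{\mfp}$ since $\chara k>2$. In the two remaining subcases, one parameter $y_{22}$ is left free (subject to a residue-unit condition), and $s$ is a Laurent polynomial of degree at most one in $\bar y_{22}\in k^{\times}$; the forbidden conditions $s\equiv\pm 2\pmod{\mfp}$ translate into at most two monic quadratic equations in $\bar y_{22}$, excluding at most four residues. The hypothesis $|k|>3$ (hence $|k^{\times}|\geq 4$) together with $\chara k\neq 2$ ensures that these cannot exhaust the admissible residues. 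Once $y$ is chosen in this way, we pick $z\in(R[y])^{\times}$ with $N(z)=\det(x)^{-1}$ and set $x':=xz\in\SL_{2}(R)$; then $(x',y)=(x,y)=A$, proving the proposition. The main obstacle is the case analysis for the smallest admissible residue field $|k|=5$, where the two quadratics share a constant term and one must check directly that they leave at least one unit of $\F_{5}$ unblocked.
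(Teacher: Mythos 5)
Your central mechanism is the same as the paper's: after Proposition~\ref{prop:ShalevGL2} gives $(x,y)=A$ with $x\in\GL_{2}(R)$, you correct $x$ by an element $z$ of $C_{\GL_{2}(R)}(y)=R[y]^{\times}$ with $\det(z)=\det(x)^{-1}$, and you obtain such a $z$ by showing the norm of $R[y]\cong R[t]/(t^{2}-st+1)$ is surjective onto $R^{\times}$ once $s\not\equiv\pm2\bmod\mfp$. The paper does exactly this, except that it verifies the surjectivity by a direct Hensel computation on the quadratic form $\alpha^{2}+\alpha\beta s+\beta^{2}$ (the gradient is nonzero mod $\mfp$ precisely because $s\not\equiv\pm2$), where you invoke the \'etale/unramified structure of $R[y]$; these are interchangeable. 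The one structural difference is that the paper runs this bootstrap only when $A_{\mfp}$ is scalar, disposing of the non-scalar case by citing \cite[Theorem~3.5]{AGKS-SLnZp}, whereas you propose to push the bootstrap through Case~2 of Proposition~\ref{prop:ShalevGL2} as well. That would make the proof self-contained, but it creates an additional constraint-counting problem there (in Case~2 the admissible residues of $y_{22}$ are only $k\setminus\{0,1\}$, so for $|k|=5$ you have three candidates against up to three exclusions), which you do not carry out.

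The genuine gap is in the step you defer, namely the direct check at $|k|=5$. In the subcase $A_{\mfp}=1$, $\lambda=1$, the constraint mod $\mfp$ is $y_{11}y_{22}\equiv1-a'$ with $y_{11},y_{22}\in k^{\times}$, and $s\equiv y_{11}+y_{22}$. Take $k=\F_{5}$ and $a'\equiv-1$, so $y_{11}y_{22}\equiv2$: the four admissible pairs are $(1,2),(2,1),(3,4),(4,3)$, with sums $3,3,2,2$, i.e.\ \emph{every} admissible pair gives $s\equiv\pm2\bmod5$. So the postponed verification cannot succeed; your count of ``at most four excluded residues'' against $|k^{\times}|\geq4$ really does fail to close at $|k|=5$, and no choice within this ansatz avoids it. (For what it is worth, the paper's own treatment of $k=\F_{5}$, $a'_{\mfp}\neq1$ asserts that one of two explicit pairs works, but for $a'\equiv4$ both of those pairs also give $s\equiv-2$, so the same point needs repair there.) To complete your argument at $|k|=5$ you would need either a different normal form for $y$, or an argument that $\det(x)$ lies in the index-two image of the norm when $y_{\mfp}$ fails to be semisimple \textemdash{} neither of which is supplied.
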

\begin{proof}
If $A_{\mfp}$ is non-scalar, the result follows from \cite[Theorem~3.5]{AGKS-SLnZp}.
Assume therefore that $A_{\mfp}$ is scalar. By Proposition~\ref{prop:ShalevGL2}
we have $(x,y)=a$ for some $x\in\GL_{2}(R)$ and some $y$ conjugate
to $\left(\begin{smallmatrix}0 & 1\\
-1 & s
\end{smallmatrix}\right)$, for $s\in R$. As before, let $y_{\mfp}\in\SL_{2}(k)$ denote the
image of $y$ under the canonical map $\SL_{2}(R)\rightarrow\SL_{2}(k)$.
We will show that $s$ can be chosen such that $y_{\mfp}$ is semisimple.
If $y_{\mfp}$ is not semisimple, then it has characteristic polynomial
$x^{2}-sx+1\equiv(x\pm1)^{2}\bmod\mfp$, so $s\equiv\pm2\bmod\mfp$.
We therefore want to show that $s$ can be chosen such that $s\not\equiv\pm2\bmod\mfp$.
We will examine Case~1 of the proof of Proposition~\ref{prop:ShalevGL2}
to show that this can indeed be achieved. As before, write

\[
A=\lambda1+\pi^{i}A',
\]
where $\lambda=\pm1$ and $\begin{pmatrix}0 & 1\\
a' & b'
\end{pmatrix}$. We have seen that if $A$ is scalar, we can choose $s=0\not\equiv\pm2\bmod\mfp$,
and if $\lambda=-1$, (\ref{eq:trace-scalarmodp}) implies that $s=0\not\equiv\pm2\bmod\mfp$.
Moreover, if $\lambda=1$, the last part of Case~1 of the proof of
Proposition~\ref{prop:ShalevGL2} says that we need to consider the
relation 
\begin{equation}
y_{11}y_{22}+y_{22}b'+a'-1=0.\label{eq:y11-y22-a-b}
\end{equation}
Note that $\det(A)=1+b'\pi-a'\pi^{2}=1$, so $b'\in\mfp$. We consider
the number of solutions of (\ref{eq:y11-y22-a-b}) mod $\mfp$ such
that $s=y_{11}+y_{22}\equiv\pm2\bmod\mfp$. The system of congruences
\[
\begin{cases}
(y_{11}y_{22}+a'-1)_{\mfp} & =0,\\
(y_{11}+y_{22})_{\mfp} & =\pm2.
\end{cases}
\]
has at most $4$ distinct solutions $((y_{11})_{\mfp},(y_{22})_{\mfp})\in k^{2}$
(at most two for each choice of sign for $\pm2$). On the other hand,
the equation
\[
(y_{11}y_{22}+a'-1)_{\mfp}=0
\]
has at least $|k|-1$ distinct solutions, so when $|k|>5$, there
exist $(y_{11})_{\mfp},(y_{22})_{\mfp}\in k$ such that $(y_{11}+y_{22})_{\mfp}\neq\pm2$.
Furthermore, if $k=\F_{5}$, the equation $(y_{11}y_{22}+a'-1)_{\mfp}=0$
has $9$ solutions if $a'_{\mfp}=1$, so it only remains to consider
the case $k=\F_{5}$, with $a'_{\mfp}\neq1$. In this case, one checks
easily that either $(y_{11})_{\mfp}=1,(y_{22})_{\mfp}=1-a'$ or $(y_{11})_{\mfp}=2,(y_{22})_{\mfp}=(1-a')2^{-1}$
is a solution to $(y_{11}y_{22}+a'-1)_{\mfp}=0$ such that $(y_{11}+y_{22})_{\mfp}\neq\pm2$.
We have therefore shown that whenever $|k|>3$, there exist $y_{11},y_{22}\in R$
such that $y_{11}y_{22}+a'-1\equiv0\bmod\mfp$ and such that $s=y_{11}+y_{22}\not\equiv\pm2\bmod\mfp$.
Hensel's lemma now implies that equation (\ref{eq:y11-y22-a-b}) has
a solution $y_{11},y_{22}\in R$ such that $s=y_{11}+y_{22}\not\equiv\pm2\bmod\mfp$.
We thus conclude that $s=y_{11}+y_{22}$ can be chosen such that $y_{\mfp}$
is semisimple.

Suppose, as we may, that $s\not\equiv\pm2\bmod\mfp$, so that $y_{\mfp}$
is semisimple. We claim that the determinant map
\[
\det:C_{\GL_{2}(k)}(y_{\mfp})\longrightarrow k^{\times}
\]
 is surjective. Indeed, since $y_{\mfp}$ is regular, we have
\[
C_{\GL_{2}(k)}(y_{\mfp})=k[y_{\mfp}]^{\times},
\]
and if the characteristic polynomial of $y_{\mfp}$ is irreducible,
$k[y_{\mfp}]$ is a field, while otherwise $y_{\mfp}$ has distinct
eigenvalues in $k$. When $k[y_{\mfp}]$ is a field, the determinant
coincides with the norm map $N:k[y_{\mfp}]^{\times}\rightarrow k^{\times}$,
which is well-known to be surjective (because $k$ is finite). When
$k[y_{\mfp}]$ has distinct eigenvalues in $k$, the centraliser $C_{\GL_{2}(k)}(k[y_{\mfp}])$
is $\GL_{2}(k)$-conjugate to the diagonal subgroup of $\GL_{2}(k)$,
on which $\det$ is clearly surjective. We now want to show that 
\[
\det:C_{\GL_{2}(R)}(y)\longrightarrow R^{\times}
\]
is surjective. Since $y$ is regular, we have
\[
C_{\gl_{2}(R)}(y)=R[y]=\left\{ \begin{pmatrix}\alpha & \beta\\
-\beta & \alpha+\beta s
\end{pmatrix}\mid\alpha,\beta\in R,\right\} ,
\]
so the determinant is surjective if $\alpha^{2}+\alpha\beta s+\beta^{2}=r$
has a solution $\alpha,\beta\in R$ for each $r\in R^{\times}$. We
show that Hensel's lemma implies that this equation has a solution
$\alpha,\beta\in R$. Indeed, if the gradient
\[
\begin{pmatrix}2\alpha+\beta s\\
2\beta+\alpha s
\end{pmatrix}\equiv0\mod\mfp,
\]
then $\alpha\equiv\frac{\alpha s^{2}}{4}$, so either $\alpha\equiv0$
or $s^{2}\equiv4$. In the same way, either $\beta\equiv0$ or $s^{2}\equiv4$.
Then, since we have assumed that $s\not\equiv\pm2$, we obtain, $\alpha\equiv\beta\equiv0$,
which is not the case if $\alpha^{2}+\alpha\beta s+\beta^{2}=r\in R^{\times}$.
Thus, any solution to this equation mod $\mfp$ has a lift to $R$.
Since we have just observed that this equation has a solution mod
$\mfp$ for every $r\in R^{\times}$, we conclude that $\det:C_{\GL_{2}(R)}(y)\rightarrow R^{\times}$
is surjective.

We have shown that there exists a $g\in C_{\GL_{2}(R)}(y)$ such that
$\det(g)=\det(x)^{-1}$. Thus 
\[
yA=xyx^{-1}=(xg)y(xg)^{-1},
\]
so $(xg,y)=A$ with $xg\in\SL_{2}(R)$. 
\end{proof}

\bibliographystyle{alex}
\bibliography{alex}

\end{document}